\newtheorem{thm}{Theorem}[section]
\newtheorem{defi}[thm]{Definition}
\newtheorem{lem}[thm]{Lemma}
\newtheorem{conj}[thm]{Conjecture}
\newtheorem{prop}[thm]{Proposition}
\newtheorem{quest}[thm]{Question}
\newtheorem{remark}[thm]{Remark}
\renewenvironment{proof}[1][\proofname]{%
   \par\pushQED{\qed}\normalfont%
   \topsep6\p@\@plus6\p@\relax
   \trivlist\item[\hskip\labelsep\bfseries#1\@addpunct{.}]%
   \ignorespaces
}{%
   \popQED\endtrivlist\@endpefalse
}
\DeclareMathOperator{\trans}{trans}
\DeclareMathOperator{\aug}{aug}
\DeclareMathOperator{\EV}{EV}
\title{Orderability}
\author{Khanh Le}
\date{\today}
\title{Left orderability for surgeries on the $[1,1,2,2,2j]$ two-bridge knots}
\author{Khanh Le}
\date{March 2021}
\begin{document}

\maketitle

\begin{abstract}
    Let $M$ be a $\bbQ$-homology solid torus. In this paper, we give a cohomological criterion for the existence of an interval of left-orderable Dehn surgeries on $M$. We apply this criterion to prove that the two-bridge knot that corresponds to the continued fraction $[1,1,2,2,2j]$ for $j\geq 1$ admits an interval of left-orderable Dehn surgeries. This family of two-bridge knots gives some positive evidence for a question of Xinghua Gao. 
\end{abstract}

\section{Introduction}
A group $G$ is \emph{left orderable} if it admits a strict total ordering on the group elements such that $g < h $ implies that $fg < fg$ for all elements $f,g,h \in G$. Left-orderability arises naturally in the study of low-dimensional topology, foliation theory, and group theory. Well-known examples of left-orderable groups include torsion-free abelian groups, free (non-abelian) groups, surface groups and the group of orientation preserving homeomorphisms of the real line. In 3-manifold topology, left-orderability is an important concept due to its role in the L-space conjecture.
\begin{conj}[The L-space conjecture]
\label{conj:Lspace}
For an irreducible $\bbQ$-homology 3-sphere $M$, the following are equivalent 
\begin{enumerate}
    \item $\pi_1(M)$ is left-orderable.
    \item $M$ is not an L-space.
    \item $M$ admits a coorientable taut foliation. 
\end{enumerate}
\end{conj}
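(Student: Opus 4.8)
The plan is not to claim a proof of the full conjecture---the implications from left-orderability, or from the L-space property, back to taut foliations are famously open---but to describe how one would assemble the two implications that are genuinely theorems and to pinpoint where the argument stalls.

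First I would prove $(3) \Rightarrow (2)$ via contact geometry and Heegaard Floer homology. By the Eliashberg--Thurston theorem a coorientable taut foliation on a closed oriented $3$-manifold perturbs to a weakly (indeed symplectically) fillable contact structure $\xi$; by Ozsv\'ath--Szab\'o's theorem on the Heegaard Floer contact invariant, $c(\xi) \in \widehat{HF}(-M)$ is then nonzero, which forces $\operatorname{rank}\widehat{HF}(M) > |H_1(M)|$, i.e.\ $M$ is not an L-space. The only delicate point is the rational homology sphere case, where producing the fillable structure needs the $C^0$ refinements of Kazez--Roberts (or Bowden); the rest is bookkeeping with orientations.

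Next I would address $(3) \Rightarrow (1)$ using Thurston's universal circle. A coorientable taut foliation on an atoroidal $M$ yields a faithful action of $\pi_1(M)$ on $S^1$ preserving a pair of invariant laminations (Calegari--Dunfield); when the Euler class of this action vanishes---or after passing to a finite cover where it does and then arguing the order descends---one lifts to a faithful action on the real line, and any nontrivial subgroup of the orientation-preserving homeomorphisms of the real line is left-orderable, hence so is $\pi_1(M)$. This I expect to be the main obstacle, and it is exactly why the statement is still a conjecture: the Euler-class obstruction, descent through finite covers, and essential tori are not controlled in general. Pragmatically I would first clear the Seifert-fibered and graph-manifold cases by hand, where all three conditions are understood (Boyer--Rolfsen--Wiest, Lisca--Stipsicz, Boyer--Gordon--Watson), and reduce the hyperbolic case to the universal-circle construction.

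For the converse implications $(1)\Rightarrow(3)$ and $(2)\Rightarrow(3)$ the plan would be to present every rational homology sphere as Dehn surgery on a link and try to build a taut foliation from branched surfaces and sutured-manifold hierarchies in the spirit of Gabai, Roberts, and Li--Roberts, promoting left-orderability or non-L-space-ness of a filling to a foliation. This is where the technology genuinely runs out---one controls which surgery slopes carry a foliation only case by case---which is precisely why the cohomological gluing criterion developed later in this paper yields an \emph{interval} of left-orderable surgeries rather than the full equivalence. I would flag this as the true obstacle and not attempt it here.
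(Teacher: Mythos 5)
The paper does not prove this statement: it is the L-space conjecture, stated as \cref{conj:Lspace} precisely because it is open, and the paper only cites evidence for it (e.g.\ the graph-manifold case due to Boyer--Clay and Rasmussen et al.). Your proposal correctly declines to prove it and accurately identifies which implications are theorems --- $(3)\Rightarrow(2)$ via Eliashberg--Thurston, the $C^0$ refinements of Kazez--Roberts/Bowden, and the Ozsv\'ath--Szab\'o contact-invariant argument --- and which remain open, namely everything pointing toward $(3)$ and the general case of $(3)\Rightarrow(1)$, where the universal-circle Euler-class obstruction is exactly the sticking point you name. The only refinement worth making is that for $(3)\Rightarrow(1)$ you do not need faithfulness of the lifted action on $\bbR$: by the Boyer--Rolfsen--Wiest criterion the paper itself quotes, a nontrivial homomorphism from the fundamental group of an irreducible $3$-manifold to a left-orderable group already suffices. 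There is nothing in the paper to compare your argument against, and no reviewer should expect a proof here.
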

An L-space is a $\bbQ$-homology 3-sphere with $\dim \hat{HF}(M) = |H_1(M;\bbZ)|$ where $\hat{HF}(M)$ is the Heegaard Floer homology of $M$ \cite[Definition 1.1]{OS05}. There has been a substantial amount of evidence in favor of this conjecture. For example, the L-space conjecture holds for all graph manifolds \cite{BC17} and \cite{RRDW15}. 

In view of \cref{conj:Lspace}, there have been a lot of ideas developed to study left-orderability of 3-manifold groups. It is a well-known fact that a countable group is left-orderable if and only if it embeds in the group of orientation-preserving homeomorphisms of the real line \cite[Theorem 6.8]{Ghys}. In the case of an irreducible compact 3-manifold, its fundamental group is left-orderable if and only if it admits a non-trivial homomorphism onto a left-orderable group \cite[Theorem 1.1]{BRW05}. In particular, all manifolds with positive first Betti number are left-orderable. Therefore, it is interesting to construct left orders on $\bbQ$-homology spheres, for example those coming from doing Dehn filling and from taking cyclic branched covering of $\bbQ$-homology solid torus. 

A fruitful way to build left-orderings on $\bbQ$-homology spheres is by lifting $\PSL_2(\bbR)$ representations to $\widetilde{\PSL}_2(\bbR)$. This strategy has been employed with a lot of success, for example see \cite{TranBranchedCover, TranDehnSurgery, HuBranchedCover}. Recently, Dunfield, Culler and independently Gao have introduced the idea of using the extension locus of a compact 3-manifold with torus boundary $M$ to order families of $\bbQ$-homology spheres arising by doing Dehn filling on $M$ \cite{CD18,gao2019orderability}. Furthermore, they gave several criteria implying the existence of intervals of left-orderable Dehn fillings on $M$. To state their results, we need the following definition:

\begin{defi}
\label{def:LongitudinalRigidity}
A compact 3-manifold $Y$ has \emph{few characters} if each positive-dimensional component of the $\PSL_2(\bbC)$-character variety $X(Y)$ consists entirely of characters of reducible representations. An irreducible $\bbQ$-homology solid torus $M$ is called \emph{longitudinally rigid} when $M(0)$ has few characters where $M(0)$ is the closed manifold obtained from $M$ by doing Dehn filling along the homological longitude.  
\end{defi}
We summarize their results in the following:

\begin{thm} 
[{{\cite[Theorem 7.1 ]{CD18} and \cite[Theorem 5.1]{gao2019orderability}}}]
\label{thm:CullerDunfieldGao}
Suppose that $M$ is longitudinally rigid irreducible $\bbZ$-homology solid torus. Then the following are true:
\begin{enumerate}
    \item If the Alexander polynomial of $M$ has a simple root $\xi \neq 1$ on the unit circle, then there exists $a >0$ such that for every rational $r \in (-a,0) \cup (0,a)$ the Dehn filling $M(r)$ is orderable. 
    \item If the Alexander polynomial of $M$ has a simple positive real root $\xi \neq  1$, then there exists a nonempty interval $(-a,0]$ or $[0,a)$ such that for every rational $r$ in the interval, the Dehn filling $M(r)$ is orderable. 
\end{enumerate}
\end{thm}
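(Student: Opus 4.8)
The plan is to read both statements off the structure of the \emph{translation extension locus} $\EL(M) \subset \overline V := H^1(\partial M;\bbR)$ of \cite{CD18,gao2019orderability}, using three ingredients. First, $\EL(M)$ is a locally finite, integrally piecewise-linear union of points and properly embedded arcs, invariant under $x \mapsto -x$, whose non-ideal points record genuine representations $\widetilde\rho : \pi_1(M) \to \widetilde{\PSL}_2(\bbR)$ via the peripheral translation-number class $x = \trans \circ \widetilde\rho|_{\pi_1(\partial M)} \in \overline V$. Second, the orderability criterion of \cite{CD18,gao2019orderability}: for a slope $\alpha$, if the line $L_\alpha := \{x \in \overline V : x(\alpha) = 0\}$ meets $\EL(M)$ at a point that is neither the origin nor an ideal point, then $\pi_1(M(\alpha))$ is left orderable. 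Third, the longitudinal rigidity hypothesis (\cref{def:LongitudinalRigidity}): since $M(0)$ has few characters, no positive-dimensional family of irreducible characters of $M(0)$ can give rise to a $2$-dimensional portion of $\EL(M)$ near the longitudinal line $L_\lambda$, so $\EL(M)$ is a genuine $1$-complex there and the local models used below are valid.

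Step 1 is to locate the germ of $\EL(M)$ attached to a root of $\Delta_M$. The reducible metabelian representation associated to a root $\xi$ of the Alexander polynomial sends the meridian $\mu$ to the elliptic element of rotation angle $\arg\xi$ when $|\xi| = 1$, and to the hyperbolic element of eigenvalue $\sqrt\xi$ when $\xi > 0$ is real, while it sends the null-homologous longitude $\lambda$ into the derived subgroup, on which the translation number vanishes. Consequently a simple root on the unit circle with $\xi \ne 1$ yields a point $p_\xi \in \EL(M) \cap L_\lambda$ with $p_\xi \ne 0$, since $\xi \ne 1$ forces the rotation number of $\mu$ to be nonzero, whereas a simple positive real root with $\xi \ne 1$ yields the origin, at which one must examine the arcs of $\EL(M)$ that issue from it. The essential point, combining the $\widetilde{\PSL}_2(\bbR)$-deformation theory of these representations with longitudinal rigidity, is that \emph{simplicity} of $\xi$ forces a non-degenerate local model: in the unit-circle case a single arc through $p_\xi$ that is \emph{transverse} to $L_\lambda$, and in the positive-real case a single arc (or a pair of arcs exchanged by $x \mapsto -x$) issuing from the origin \emph{tangent} to $L_\lambda$ with non-vanishing leading term, so that it enters the lines $L_r$ for small $r$ from exactly one side of $0$.

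Step 2 is a pencil-of-lines argument promoting one good slope to an interval. All the lines $\{L_r : r \in \bbQ\}$ pass through the origin of $\overline V$, with $L_0 = L_\lambda$, and $L_r$ converges to $L_\lambda$ from the two sides of it as $r \to 0^{\pm}$. In case (1), because the arc of Step 1 crosses $L_\lambda$ transversally at the point $p_\xi$, which lies at nonzero distance from the origin, for every sufficiently small rational $r \ne 0$ the line $L_r$ meets that arc at a point close to $p_\xi$ — hence nonzero and non-ideal — and this happens on both sides of $0$; by the orderability criterion, $\pi_1(M(r))$ is left orderable for all rational $r \in (-a,0)\cup(0,a)$. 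In case (2), the arc issuing from the origin tangent to $L_\lambda$, with its one-sided leading behaviour, meets $L_r$ at a nonzero non-ideal point for all small rational $r$ on one fixed side of $0$ and at no such point on the other side; combined with the fact that $M(0)$ has positive first Betti number, so that $\pi_1(M(0))$ is left orderable by \cite{BRW05}, this yields a nonempty half-open interval $(-a,0]$ or $[0,a)$ of left-orderable rational fillings.

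The main obstacle is Step 1: extracting from the \emph{multiplicity one} hypothesis on $\xi$ the exact non-degenerate germ of $\EL(M)$ — a transverse arc in case (1), a one-sided tangent arc in case (2) — rather than a tangency, a higher-order contact, a branch point, or a genuinely $2$-dimensional piece. This is precisely where longitudinal rigidity is indispensable, as it excludes the $2$-dimensional pathology coming from $X(M(0))$, and where one must marry the local structure theory of $\EL(M)$ from \cite{CD18} with the deformation theory of the metabelian $\widetilde{\PSL}_2(\bbR)$-representations attached to a simple root of $\Delta_M$; the extension of this package to the generality of the statement is carried out in \cite{gao2019orderability}. Once that local model is secured, Step 2 is elementary plane geometry.
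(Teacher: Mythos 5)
Your outline of case (1) is essentially the Culler--Dunfield argument, but case (2) --- the only part of this theorem the paper actually proves, as \cref{thm:Gao-Ordering} --- is set up in the wrong locus, and the error is fatal rather than cosmetic. You work throughout in the translation extension locus $\EL(M)\subset H^1(\partial M;\bbR)$, whose coordinate is $\trans\circ\widetilde\rho|_{\pi_1(\partial M)}$. For a simple positive real root $\xi\neq 1$ the reducible representation sends $\mu$ to a \emph{hyperbolic} element, and the entire deformed family $\rho_t$ of \cref{lem:GaoLemma_5_1} still has hyperbolic peripheral holonomy for small $t$; all of these have translation number zero on both $\mu$ and $\lambda$, so the whole deformation collapses to the origin of $\EL(M)$. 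There is no ``arc issuing from the origin tangent to $L_\lambda$ with non-vanishing leading term'': the arc you need is literally invisible in $\EL(M)$, and even if it were visible, the orderability criterion excludes the origin, so intersections of $L_r$ with a germ at the origin certify nothing without a second, nonzero intersection point whose existence you have not established. This is precisely the obstruction that led Gao to introduce the \emph{holonomy} extension locus $HL_{\widetilde{G}}(M)\subset H^1(\partial M;\bbR)\times H^1(\partial M;\bbZ)$ (\cref{def:HolonomyExtensionLocus}), whose real coordinate records $\ln|a|$ for the boundary eigenvalue $a$ rather than a translation number. In that locus the reducible representation at $\xi$ lands on the horizontal axis at distance $|\ln|\xi||^{1/2\text{-ish}}\cdot k\neq 0$ from the origin, the deformation projects to a genuine non-constant arc $A$ starting there, and the pencil of lines $L_r$ through the origin must catch $A$ for all small rational $r$ of at least one sign --- that, not a tangency computation, is the source of the one-sided interval $(-a,0]$ or $[0,a)$ (the endpoint $0$ being handled, as you say, by positive first Betti number).

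Two further points. First, your account of longitudinal rigidity is off: $\EL(M)$ and $HL_{\widetilde{G}}(M)$ are always locally finite unions of arcs and points, with no possible ``$2$-dimensional portion'' to exclude. Rigidity is used for exactly one thing in the paper's proof: if the arc $A$ were contained in the horizontal axis, then $\rho_t(\lambda)$ would be trivial for all $t$, the whole path would factor through $M(0)$, and one would obtain a positive-dimensional family of irreducible characters of $M(0)$, contradicting the few-characters hypothesis. Second, you defer the construction of the arc entirely to the references (``the deformation theory of the metabelian representations attached to a simple root''), which is acceptable for a cited theorem, but you should at least name the mechanism correctly: the simple-root hypothesis is what lets one deform the diagonal abelian representation $\rho_\alpha$ of \cref{eq:AlexanderAbelian} into a path of $G$-representations that are irreducible for $t\neq 0$ (\cref{lem:GaoLemma_5_1}), after which the lift to $\widetilde{\PSL}_2(\bbR)$ exists because the lifting obstruction is locally constant. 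As written, your Step 1 for case (2) would need to be discarded and rebuilt around the holonomy extension locus before Step 2's plane geometry applies.
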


\begin{remark}
In fact, their techniques also apply to the case where $M$ is a $\bbQ$-homology solid torus with some further hypothesis on $\xi$ in the first statement. The full version of the second statement is stated below in \cref{thm:Gao-Ordering}. 
\end{remark}

Culler and Dunfield also proved the following criterion for left-orderability:
\begin{thm}
\label{thm:CullerDunfieldTraceFieldOrdering}
Suppose that $M$ is a hyperbolic $\bbZ$-homology solid torus, whose trace field has a real embedding, then there exists $a >0$ such that for every rational $r\in(-a,0) \cup (0,a)$ the Dehn filling $M(r)$ is orderable. 
\end{thm}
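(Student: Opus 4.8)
The plan is to use the translation extension locus $\EL(M)\subseteq V:=H^1(\partial M;\bbR)$ of Culler--Dunfield \cite{CD18}. Write $\mu,\lambda$ for the meridian and homological longitude, identify $V$ with $\bbR^2$ via the dual basis $y\mapsto(y(\mu),y(\lambda))$, and recall that the filling slope $r=p/q$ is the curve $\gamma_r=\mu^p\lambda^q$, with associated line $L_{\gamma_r}=\{y:p\,y(\mu)+q\,y(\lambda)=0\}$ through the origin of slope $-r$. Every $\rho\colon\pi_1(M)\to\PSL_2(\bbR)$ lifts to $\widetilde{\PSL}_2(\bbR)$ (the obstruction lies in $H^2(M;\bbZ)=0$), the translation numbers of the peripheral elements of a lift define a point of $V$, and $\EL(M)$ is the set of all such points; it is invariant under negation and under translation by the image of $H^1(M;\bbZ)\to H^1(\partial M;\bbZ)$, which for a $\bbZ$-homology solid torus contains a nonzero class of the form $(c,0)$. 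The objective is to produce, for each small $r\neq 0$, a point of $\EL(M)\setminus\{0\}$ on $L_{\gamma_r}$ coming from an irreducible representation with \emph{elliptic} peripheral holonomy. For such a representation and lift, the lift sends $\gamma_r$ to an element of translation number $0$ (since the point lies on $L_{\gamma_r}$) covering the elliptic-or-trivial element $\rho(\mu)^p\rho(\lambda)^q$ of $\PSL_2(\bbR)$; as a lift of a nontrivial elliptic has nonzero translation number, this element is the identity, so the lift descends to a homomorphism $\pi_1(M(\gamma_r))\to\widetilde{\PSL}_2(\bbR)$, nontrivial by irreducibility. Since $\widetilde{\PSL}_2(\bbR)$ is left-orderable and, for $r$ small, $M(\gamma_r)$ is irreducible, Boyer--Rolfsen--Wiest \cite[Theorem 1.1]{BRW05} then gives that $\pi_1(M(\gamma_r))$ is left-orderable.

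To locate such points, I would first feed in the hyperbolic structure. Let $\sigma$ be a real embedding of the trace field $k$ and $\rho_\sigma=\sigma\circ\rho_0$ the Galois conjugate of the discrete faithful holonomy. Being irreducible with real character and containing a nontrivial parabolic, $\rho_\sigma$ is conjugate into $\PSL_2(\bbR)$, with peripheral image a nontrivial parabolic subgroup; the lift trivial on the corresponding one-parameter parabolic group has zero peripheral translation numbers, so $0\in\EL(M)$. The component of the $\PSL_2(\bbC)$-character variety through $\chi_{\rho_\sigma}$ is a curve --- the $\sigma$-conjugate of Thurston's geometric component --- and its real points near $\chi_{\rho_\sigma}$ form an arc on which $\mathrm{tr}(\rho(\mu))^2$ is a local coordinate; on the side where it is $<4$ the meridian is elliptic, the representations lie in $\PSL_2(\bbR)$, the longitude (commuting with an elliptic, nontrivial by continuity from $\rho_\sigma$) is a nontrivial elliptic as well, and lifts varying continuously from the one above produce an arc $A'\subseteq\EL(M)$ issuing from the origin. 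Its tangent direction there is $(1,\sigma(\tau))$, where $\tau$ is the cusp shape: this deformation is the $\sigma$-conjugate of the one at the complete structure, along which the ratio of the logarithms of the longitude and meridian eigenvalues tends to $\tau$ (Neumann--Zagier), so here it tends to $\sigma(\tau)$, and on the elliptic side (where both logarithms are purely imaginary) so does the ratio of the two peripheral rotation numbers. Because the cusp shape of a cusped hyperbolic manifold lies in the trace field $k$, we get $\sigma(\tau)\in\bbR$, and $\sigma(\tau)\neq 0$ since $\mathrm{Im}\,\tau>0$; hence $A'$ is transverse at the origin to $L_\lambda=\{y(\lambda)=0\}$. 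I expect this --- pinning down the local shape of $\EL(M)$ at the origin and extracting that $\sigma(\tau)$ is a nonzero real number --- to be the main obstacle.

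The remaining step is soft. By negation-symmetry and translation-invariance, $A'':=(A'\cup\{0\}\cup(-A'))+(c,0)$ lies in $\EL(M)$, passes through $(c,0)\neq 0$, and has tangent direction $(1,\sigma(\tau))$ with $\sigma(\tau)\in\bbR\setminus\{0\}$ there, so it meets $L_\lambda$ transversally at $(c,0)$. As $r\to 0$ the line $L_{\gamma_r}$ of slope $-r$ tends to $L_\lambda$, so by the implicit function theorem there is $a>0$ such that for every rational $r$ with $0<|r|<a$ the line $L_{\gamma_r}$ crosses $A''$ at a point close to but distinct from $(c,0)$, lying in the translate of $A'$ or of $-A'$; such a point is nonzero and comes from an irreducible representation with elliptic peripheral holonomy, as demanded in the first paragraph. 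Finally, shrinking $a$ so that $(-a,a)$ contains no exceptional slope other than possibly $0$ --- there are only finitely many exceptional slopes, by Thurston's hyperbolic Dehn surgery theorem --- each $M(\gamma_r)$ with $0<|r|<a$ is hyperbolic, hence irreducible, and we conclude that $\pi_1(M(\gamma_r))$ is left-orderable.
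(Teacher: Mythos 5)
The paper does not prove \cref{thm:CullerDunfieldTraceFieldOrdering}; it is quoted as background from \cite{CD18}, so the only meaningful comparison is with Culler--Dunfield's own argument, and your proposal is a faithful reconstruction of it: Galois-conjugating the holonomy into $\PSL_2(\bbR)$ via the real place, deforming to boundary-elliptic representations along the conjugated geometric component, and using the nonvanishing real number $\sigma(\tau)$ ($\tau$ the cusp shape, which lies in the trace field) to get an arc of the extension locus crossing the horizontal axis transversally at a nonzero translate, whence intersections with $L_r$ for all small $r\neq 0$ and left-orderability via \cite{BRW05}. The only points you wave at rather than prove are exactly the ones \cite{CD18} has to work for --- that $\tr^2(\mu)$ is a genuine local coordinate on the real arc at $\chi_{\rho_\sigma}$ (so that the elliptic side is nonempty) and that the Neumann--Zagier tangent computation transports correctly under the Galois embedding --- and you correctly flag these as the main obstacle.
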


In view of \cref{thm:CullerDunfieldGao}, it is natural to ask when a $\bbQ$-homology solid torus is longitudinally rigid. Since the character variety is notoriously hard to compute, see for example \cite{BP13, Chesebro20}, longitudinal rigidity is difficult to study in a general setting. Culler and Dunfield gave a topological condition which implies longitudinal rigidity. In particular, they introduced the following concept:

\begin{defi}
\label{def:Lean}
Let $M$ be a knot exterior. We say that $M$ is \emph{lean} if the longitudinal Dehn filling $M(0)$ is prime and every closed essential surface in $M(0)$ is a fiber in a fibration over $S^1$.
\end{defi}

For example, the $(-2,3,2s+1)$-pretzel knots were shown to be lean for $s\geq 3$, so there is an interval of about $0$ of left-orderable Dehn surgeries on these knot complements \cite[Theorem 4]{Nie19}. However as remarked in \cite[Section 1.6]{CD18}, this leanness condition is rather restrictive. In particular, for a knot complement $K$ in $S^3$ being lean implies that $K$ fibers. Nevertheless, the first statement of \cref{thm:CullerDunfieldGao} was proved to be true without the condition of longitudinal rigidity by Herald and Zhang \cite[Theorem 1]{HZ19}. Motivated by this result, Xinghua Gao asked:

\begin{quest}\cite[Section 7]{gao2019orderability}
\label{quest:GaoQuestion}
Can the longitudinal rigidity condition be dropped from the second statement of \cref{thm:CullerDunfieldGao}? Is it possible to prove $H^1(\pi_1(M(0));\liesl_2(\bbR)_{\rho}) = 0$ where $\rho$ is the non-abelian reducible representation of $\pi_1(M)$ coming from the root of the Alexander polynomial?
\end{quest}

Following the suggestion in this question, we say that $\bbQ$-homology solid torus is locally longitudinal rigid at a root of the Alexander polynomial if $H^1(\pi_1(M(0));\liesl_2(\bbC)_{\rho}) = 0$ where $\rho$ is a non-abelian reducible representation coming from this root, see \cref{def:loclongitudinallyrigid} for a precise definition. In fact, we prove that the second item of \cref{thm:CullerDunfieldGao} still holds true under this weakened hypothesis.      
\begin{thm}
\label{thm:Le-Ordering}
Suppose that $M$ is an irreducible $\bbQ$-homology solid torus and that the Alexander polynomial of $M$ has a simple positive real root $\xi \neq 1$. Furthermore, suppose that $M$ locally longitudinally rigid at $\xi$. Then there exists a nonempty interval $(-a,0]$ or $[0,a)$ such that for every rational $r$ in the interval, the Dehn filling $M(r)$ is orderable.
\end{thm}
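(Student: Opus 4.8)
The plan is to reprove the ($\bbQ$-homology-torus) version of the second statement of \cref{thm:CullerDunfieldGao} --- that is, Gao's \cref{thm:Gao-Ordering}, whose proof runs through the translation extension locus $\EL(M)\subset H^1(\partial M;\bbR)$ --- while replacing the single appeal to longitudinal rigidity by the infinitesimal hypothesis $H^1(\pi_1(M(0));\liesl_2(\bbC)_{\rho})=0$. The skeleton of that proof is: (i) the simple positive real root $\xi\neq 1$ of $\Delta_M$ produces a non-abelian reducible representation $\rho$ of $\pi_1(M)$ and, since $\xi$ is simple, an arc $t\mapsto\rho_t$ of irreducible representations limiting to $\rho$ (a conjecture of de Rham, in the form due to Ben Abdelghani and to Heusener--Porti--Su\'arez); (ii) this arc induces an arc of $\EL(M)$ issuing from the reducible locus at the slope $0$; (iii) longitudinal rigidity is invoked to force that arc off the reducible locus, transversally; (iv) Culler--Dunfield's filling criterion turns such an arc into a one-sided interval of left-orderable fillings. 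Only step (iii) needs anything new.

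For step (i): since $\xi>0$ is real, $\rho$ may be taken with image in the real upper-triangular subgroup of $\PSL_2(\bbR)$, with $\rho(\mu)$ conjugate to $\operatorname{diag}(\xi^{1/2},\xi^{-1/2})$, hence loxodromic; because $\xi\neq 1$ one has $\rho(\lambda)=1$ (the defining twisted $1$-cocycle restricts trivially to the peripheral torus), so $\rho$ descends to $\pi_1(M(0))$ and $H^1(\pi_1(M(0));\liesl_2(\bbC)_{\rho})$ is meaningful. The arc $\rho_t$ can likewise be taken real; its tangent direction at $\rho$, hence which of the two half-planes the resulting $\EL(M)$-arc enters --- equivalently, whether the conclusion is $(-a,0]$ or $[0,a)$ --- is controlled by $\Delta_M$ near $\xi$ exactly as in \cite{CD18,gao2019orderability}. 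Steps (ii) and (iv) are invoked as a black box from \cite{CD18,gao2019orderability}.

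Step (iii) is the heart of the matter. What the argument needs is that the longitude becomes essential along the arc, i.e.\ that $\rho_t(\lambda)\neq 1$ for small $t\neq 0$; equivalently, that $\rho$ does not deform to irreducible representations within $\operatorname{Hom}(\pi_1(M(0)),\PSL_2(\bbC))$. Culler--Dunfield and Gao deduce this from longitudinal rigidity: were $\rho_t(\lambda)\equiv 1$, the characters $\chi_{\rho_t}$ would sweep out a positive-dimensional family of irreducible characters lying in a component of $X(M(0))$, contradicting that $M(0)$ has few characters. The weaker hypothesis yields the same conclusion more directly: $H^1(\pi_1(M(0));\liesl_2(\bbC)_{\rho})=0$ says $\rho$ is infinitesimally rigid as a representation of $\pi_1(M(0))$, so $Z^1=B^1$ and, by the usual dimension count, $\operatorname{Hom}(\pi_1(M(0)),\PSL_2(\bbC))$ coincides near $\rho$ with the conjugation orbit of $\rho$; since $\rho_t\to\rho$ and the $\rho_t$ are irreducible, hence not conjugate to the reducible $\rho$, they cannot factor through $\pi_1(M(0))$, i.e.\ $\rho_t(\lambda)\neq 1$ for $0<|t|$ small. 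Granting this, the remaining deformation bookkeeping of \cite{CD18,gao2019orderability} --- how the restriction map $H^1(\pi_1 M;\liesl_2(\bbC)_{\rho})\to H^1(\pi_1\partial M;\liesl_2(\bbC)_{\rho})$ pins down the slope of the $\EL(M)$-arc --- uses only data at $\rho$ and carries over verbatim.

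With a transverse, one-sided arc of $\EL(M)$ at slope $0$ in hand, I would conclude as in \cite{gao2019orderability}: such an arc meets the rational line dual to every slope $r$ in a one-sided interval about $0$, so by Culler--Dunfield's criterion $\pi_1(M(r))$ is left-orderable for all rational $r$ in an interval $(-a,0]$ or $[0,a)$; the endpoint $r=0$ costs nothing, since $b_1(M(0))\geq 1$, whence $\pi_1(M(0))$ surjects $\bbZ$ and is left-orderable by \cite[Theorem 1.1]{BRW05}. The main obstacle I anticipate is step (iii) done carefully: auditing \cite{CD18} and \cite{gao2019orderability} to confirm that longitudinal rigidity is used \emph{only} to guarantee $\rho_t(\lambda)\neq 1$ (and not, say, to control the global shape of $\EL(M(0))$ elsewhere in the argument), and handling with care the deformation theory at the reducible representation $\rho$ --- the possibly non-reduced scheme structure near $\chi_\rho$ and the reality of the arc $\rho_t$ --- so that the single input $H^1(\pi_1(M(0));\liesl_2(\bbC)_{\rho})=0$ really closes the proof.
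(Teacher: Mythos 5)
Your proposal is correct in outline and rests on the same two essential ingredients as the paper --- Gao's extension-locus machinery and a dimension count at the non-abelian reducible representation $\rho^+_\xi$ inside $R(M(0))$ --- but it organizes them differently. The paper keeps Gao's original arc $\rho_t$, which is based at the \emph{abelian} diagonal representation $\rho_\alpha$ (\cref{lem:GaoLemma_5_1}), and only after assuming $A\subset L_0$ does it build a second, real arc $\phi_t$ through $\rho^+_\xi$ by proving that every cocycle in $Z^1(\pi_1(M);\liesl_2(\bbR)_{\rho^+_\xi})$ is integrable over $\bbR$ (\cref{lem:InjectivityOnSecondCohomology,lem:AllCocyclesAreIntegrable}); it then matches $[\phi_t]$ with $[\rho_t]$ via the fact that $[\rho_\alpha]$ lies on exactly two real curves of characters and invokes \cite[Proposition 1.5.2]{CS83} to conclude that $\phi_t$ is conjugate to $\rho_t$, hence also factors through $M(0)$, and finally extracts $\dim H^1(\Gamma(0);\liesl_2(\bbR)_{\rho^+_\xi})\geq 1$ from the resulting transverse path. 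You instead base the deformation arc at $\rho^+_\xi$ from the start and run the contrapositive of the same count: $H^1=0$ together with the trivial centralizer of $\rho^+_\xi$ gives $\dim T^{Zar}_{\rho^+_\xi}(R(M(0)))\leq\dim Z^1=\dim B^1=3$, so near $\rho^+_\xi$ the variety $R(M(0))$ is the (entirely reducible) orbit closure and cannot contain the irreducible $\rho_t$. This eliminates the character-matching step, which is a genuine simplification; what it costs is that the properties the paper gets for free from \cref{lem:GaoLemma_5_1} must be re-established for the $\rho^+_\xi$-based arc, namely (a) that the arc can be taken real --- exactly the content of \cref{lem:AllCocyclesAreIntegrable}, since the real version of Heusener--Porti--Su\'arez is not off-the-shelf and is why the paper proves \cref{lem:InjectivityOnSecondCohomology} --- and (b) that $\tr^2(\rho_t(\gamma))$ is non-constant for some peripheral $\gamma$, so the image arc in $H_{0,0}(M)$ actually moves; this follows from the injectivity of $H^1(M;\liesl_2(\bbR)_{\rho^+_\xi})\to H^1(\partial M;\liesl_2(\bbR)_{\rho^+_\xi})$ appearing in the proof of \cref{lem:InjectivityOnSecondCohomology}, but it is not literally \cite[Lemma 5.1]{gao2019orderability}, which concerns the arc based at $\rho_\alpha$. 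One small correction: for a positive real root the relevant object is Gao's holonomy extension locus in $H^1(\partial M;\bbR)\times H^1(\partial M;\bbZ)$, not the translation extension locus $\EL(M)\subset H^1(\partial M;\bbR)$ of Culler--Dunfield, since translation numbers of hyperbolic elements are locally constant integers and would not detect the deformation.
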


As an application, we apply this result to produce an interval of left-orderable Dehn surgeries on an infinite family of two-bridge knots complement.   

\begin{thm}
\label{thm:LOTwoBridge}
For every two-bridge knot $K_j$ corresponding to the contiued fraction $[1,1,2,2,2j]$ where $j\geq 1$, there exists a nonempty interval $(-a,0]$ or $[0,a)$ such that for every rational $r$ in the interval, the Dehn filling $M(r)$ is left-orderable.  
\end{thm}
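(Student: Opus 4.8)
The plan is to verify the three hypotheses of \cref{thm:Le-Ordering} for the knot exterior $M_j := S^3 \setminus K_j$ and then invoke that theorem. Since $K_j$ is the two-bridge knot with fraction $\tfrac{24j+5}{14j+3}$ (which one checks equals $\tfrac{24j+5}{12}$, i.e. $K_j$ has the even continued fraction $[2j,2,2,2]$), it is a non-trivial knot, so $M_j$ is an irreducible $\bbZ$-homology, hence $\bbQ$-homology, solid torus; this disposes of the first hypothesis. It then remains to (a) exhibit a simple positive real root $\xi_j \neq 1$ of the Alexander polynomial $\Delta_{K_j}$, and (b) show that $M_j$ is locally longitudinally rigid at $\xi_j$, i.e. that $H^1(\pi_1(M_j(0)); \liesl_2(\bbC)_{\rho}) = 0$ for the non-abelian reducible representation $\rho = \rho_{\xi_j}$ associated to $\xi_j$.

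For (a), I would compute $\Delta_{K_j}$ from the two-bridge data -- either from the continued fraction $[2j,2,2,2]$ via a Seifert-matrix/plumbing computation or from the standard sum formula for two-bridge Alexander polynomials -- obtaining, up to units, $\Delta_{K_j}(t) = jt^4 - (6j+1)t^3 + (10j+3)t^2 - (6j+1)t + j$. Writing $\Delta_{K_j}(t) = t^2 P_j(t + t^{-1})$ with $P_j(x) = jx^2 - (6j+1)x + (8j+3)$, one has $P_j(2) = \Delta_{K_j}(1) = 1$, vertex of $P_j$ at $x = 3 + \tfrac{1}{2j} > 2$, and discriminant $4j^2+1 > 0$; hence $P_j$ has two distinct real roots, both strictly greater than $2$. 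Each such root of $P_j$ yields a reciprocal pair of simple real roots of $\Delta_{K_j}$ lying in $(0,\infty)\setminus\{1\}$, and any one of them serves as $\xi_j$. (Even if the exact coefficients of $\Delta_{K_j}$ need adjusting, the qualitative features used -- degree four, reciprocal, $\Delta_{K_j}(1) = \pm 1$, positive discriminant of $P_j$, roots of $P_j$ above $2$ -- are what the argument rests on, and an intermediate-value argument for $P_j$ gives them.)

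Step (b) is the heart of the matter. I would take the two-bridge presentation $\pi_1(M_j) = \langle a, b \mid R_j\rangle$, $R_j = w_j a w_j^{-1} b^{-1}$, together with the longitude word $\lambda_j$, so that $\pi_1(M_j(0)) = \langle a, b \mid R_j, \lambda_j\rangle$, and realise $\rho = \rho_{\xi_j}$ as the representation with $\rho(a)$, $\rho(b)$ upper triangular, sharing diagonal $(s,s^{-1})$ with $s^2 = \xi_j$, and with distinct upper-right entries normalised to $1$ and to some $v = v(\xi_j)\neq 1$ -- the existence of such $v$ compatible with $R_j$ being exactly the equation $\Delta_{K_j}(\xi_j) = 0$. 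Since $\lambda_j$ is null-homologous, its Seifert surface lifts to the infinite cyclic cover, so $\rho(\lambda_j)$, a priori unipotent, is in fact trivial and $\rho$ descends to $\pi_1(M_j(0))$. I would then compute $H^1(\pi_1(M_j(0)); \liesl_2(\bbC)_{\rho})$ by Fox calculus on this presentation, exploiting the $\mathrm{Ad}\!\circ\!\rho$-invariant filtration $0 \to \bbC_{\xi} \to W \to \bbC \to 0$ and $0 \to W \to \liesl_2(\bbC)_{\rho} \to \bbC_{\xi^{-1}} \to 0$ of the coefficient module, where $W$ is the Borel subalgebra and $\bbC_{\xi}$ is the one-dimensional module on which a meridian acts by $\xi$. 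Running the two associated long exact sequences, and using that $\Delta_{M_j(0)} \doteq \Delta_{K_j}$ (so $\xi_j$ is a simple root of the Alexander polynomial of $M_j(0)$, whence $H^k(M_j(0);\bbC_{\xi^{\pm 1}})$ is one-dimensional for $k = 1,2$ and zero otherwise), that $b_1(M_j(0)) = 1$, and that the extension class of $\rho$ is non-zero (as $\rho$ is non-abelian), one reduces the vanishing of $H^1(\pi_1(M_j(0)); \liesl_2(\bbC)_{\rho})$ to the non-vanishing of a single triple cup product $u \cup \phi \cup w \in H^3(M_j(0);\bbC)\cong\bbC$, where $u$, $w$ generate $H^1(M_j(0);\bbC_{\xi})$ and $H^1(M_j(0);\bbC_{\xi^{-1}})$ and $\phi$ generates $H^1(M_j(0);\bbC)$; equivalently, one specific $3\times 3$ minor of the twisted Fox Jacobian of $(R_j,\lambda_j)$ at $\rho$ must not vanish.

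The main obstacle is precisely this last non-vanishing, i.e. establishing local longitudinal rigidity. It is not a formal consequence of $\xi_j$ being a simple root -- otherwise the second part of \cref{thm:CullerDunfieldGao} would require no rigidity hypothesis at all, answering \cref{quest:GaoQuestion} in full generality -- so the argument must use the specific structure of the family $K_j$. The efficient route, I expect, is to replace the two-bridge presentation (whose relator length grows with $j$) by a fixed presentation in which $j$ enters only as the exponent of a "twist" subword, so that the relevant minor becomes a determinant depending polynomially on $j$ and on $\xi_j$; one then clears $\xi_j$ using $\Delta_{K_j}(\xi_j) = 0$ -- or substitutes the explicit root, which involves only $\sqrt{4j^2+1}$ -- and checks the resulting algebraic expression is non-zero for every $j\ge 1$, plausibly via a recursion relating $K_{j+1}$ to $K_j$. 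With (a) and (b) established, \cref{thm:Le-Ordering} applies verbatim and yields the asserted one-sided interval of left-orderable Dehn fillings of $M_j$.
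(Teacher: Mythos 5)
Your overall architecture matches the paper's: reduce to \cref{thm:Le-Ordering}, establish the root condition on the Alexander polynomial, and prove local longitudinal rigidity at the chosen root. Step (a) is correct and in fact cleaner than the paper's treatment: writing $\Delta(\tau)=\tau^{2}P_j(x)$ with $x=\tau+\tau^{-1}$ and $P_j(x)=jx^{2}-(6j+1)x+(8j+3)$, the values $P_j(2)=1$, the vertex at $3+\tfrac{1}{2j}>2$ and the discriminant $4j^{2}+1>0$ do yield four simple positive real roots of $\Delta$, none equal to $1$; the paper (\cref{lem:AlexanderPoly}) instead checks sign changes of $\Delta/j$ at $0,\tfrac12,1,2,5$. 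The preliminary observations (irreducibility of the exterior, $\rho$ descending to $\pi_1(M_j(0))$ because the longitude lies in the second commutator subgroup) are also fine.

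The gap is in step (b), which is the entire content of Section 3 of the paper and which you do not actually carry out. You rightly note that local longitudinal rigidity is not formal and must exploit the specific family, and you correctly anticipate the key device, namely a presentation in which $j$ enters only as the exponent of a twist subword so that the relevant determinant depends polynomially on $j$ and $\xi_j$. But the decisive non-vanishing is left at the level of ``I expect'' and ``plausibly via a recursion''; nothing is verified, and this is exactly where the theorem could fail. The paper does it concretely: it proves $w=(yx^{-1}y^{-1}x)u^{j}$ and $v=s^{j}(xy^{-1}x^{-1}y)$ for an explicit word $u$ of fixed length; it normalizes a cocycle down to two parameters $\alpha,\beta$ (\cref{prop:ParametrizingH1}); it computes $z(w)$ and $z(v)$ via geometric sums of $(\Ad\circ\rho)(u^{i})$ (\cref{lem:z(w)_z(v)}); and then $z(\ell)=0$ forces $\beta=0$ because $t^{4}+j(t^{4}-4t^{2}+1)^{2}\neq 0$ for real $t$, while the relation $z(xw)=z(wy)$ forces $\alpha=0$ because $2j\tau^{2}-(6j+1)\tau+2j$ cannot divide $\Delta(\tau)$ (a Gauss's-lemma argument on constant terms). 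Without some version of these computations --- or a completed proof of the cup-product non-vanishing you propose in their place --- the argument only reduces \cref{thm:LOTwoBridge} to an unproven cohomological vanishing rather than establishing it.
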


\begin{remark}
As we will see in \cref{lem:AlexanderPoly}, the Alexander polynomial of $K_j$ has all simple positive real roots that are not 1, and is not monic for $j \geq 2$. In particular, the complement of $K_j$ is not lean for $j\geq 2$. Furthermore, the trace field of $K_j$ for $1 \leq j \leq 30 $ has no real places, and it is most likely that the trace fields of all knots in this family share this property. Therefore, \cref{thm:LOTwoBridge} is not a direct consequence of \cref{thm:CullerDunfieldGao} nor \cref{thm:CullerDunfieldTraceFieldOrdering}. The family of two-bridge knots $[1,1,2,2,2j]$ is a genuinely new family of knots with an interval left-orderable Dehn surgeries which cannot be obtained from prior techniques. 
\end{remark}

\subsection{Outline}
In \cref{sec:Preliminaries}, we review some background materials on group cohomology, $\PSL_2$-representation variety, formal deformation of representation and holonomy extension locus. At the end of this section, we will give a proof of \cref{thm:Le-Ordering}. In \cref{sec:TwoBridge}, we will carry out the group cohomology calculation and prove that the complement of $K_j$ is locally longitudinally rigid at all roots of the Alexander polynomial. As a result, \cref{thm:LOTwoBridge} will follow from \cref{thm:Le-Ordering}.

\section{Preliminaries}
\label{sec:Preliminaries}
\subsection{Group cohomology and $\PSL_2$-representation variety}

Following the notation in \cite{CD18} and \cite{gao2019orderability}, we set $G = \PSL_2(\bbR)$ and $G_\bbC = \PSL_2(\bbC)$ throughout the paper. For a compact manifold $M$ and a group $H$, we let $R_H(M) = \Hom(\pi_1(M),H)$ be the representation variety. When $H = G_\bbC$, we denote $R(M) := R_{G_\bbC}(M)$. Since $\pi_1(M)$ is finitely generated, $R(M)$ can be identified with an algebraic subset in some affine space $\bbC^N$. The group $G$ acts on $R(M)$ by conjugation. Let us consider the minimal Hausdorff quotient $X(M) := R(M)//G_\bbC$ and the quotient map $\pi:R(M) \to X(M)$. Given a representation $\rho \in R(M)$, a character of $\rho$ is the map $\chi_\rho:\pi(M) \to \bbC$ defined by $\chi_\rho(\gamma) = \tr^2(\rho(\gamma))$. By \cite[Theorem 1.3]{HP04}, there exists a bijection between between the points of $X(M)$ and the characters of representations in $R(M)$ such that the point $t(\rho) = [\rho]$ corresponds to $\chi_\rho$. Therefore, we refer to $X(M)$ as the $\PSL_2(\bbC)$-character variety of $M$. 

Let $\Gamma$ be a group and $\rho:\Gamma \to G_\bbC$ be a representation. The Lie algebra of $G_\bbC$ can be identified with the set of trace-less 2-by-2 matrices over $\bbC$. Using the adjoint representation, the Lie algebra $\liesl_2(\bbC)$ becomes a $\Gamma$-module by
\begin{equation*}
    \gamma\cdot v = \rho(\gamma) v \rho(\gamma)^{-1}.
\end{equation*}
We denote this $\Gamma$-module by $\liesl_2(\bbC)_\rho$. The space of 1-cocycles is 
\begin{equation*}
\label{eq:SpaceCocycles}
    Z^1(\Gamma;\liesl_2(\bbC)_\rho) = \{ z: \Gamma \to \liesl_2(\bbC) \mid z(\gamma\gamma') = z(\gamma)  + \gamma \cdot z(\gamma') \ \forall \gamma, \gamma' \in \Gamma\}.
\end{equation*}
Alternatively when $\Gamma$ is a finitely presented group, we can also describe the space of cocycles as maps $\Gamma \to \liesl_2(\bbC)$ satisfying the group relations of $\Gamma$. In particular, suppose that $\Gamma = \langle \gamma_1,\dots,\gamma_n \mid w_1(\gamma_i),\dots,w_k(\gamma_i)\rangle$ is a finite presentation and that $z(\gamma_i) = v_i \in \liesl_2(\bbC)$. Given any element $w \in\Gamma$, we can express $w$ as a word $w(\gamma_i)$ in the generators $\gamma_i$'s of $\Gamma$. The equation      

\begin{equation}
\label{eq:FoxDerivatives}
    z(\gamma\gamma') = z(\gamma)  + \gamma \cdot z(\gamma') \quad \forall \gamma, \gamma' \in \Gamma
\end{equation}
determines the image of $z(w)$. This gives us a well-defined cocycle on $\Gamma$ if and only if $z(w_j) = 0$ for all relations $w_j$ of $\Gamma$, see \cite[Equation 4]{W64}. The space of 1-coboundaries is 
\begin{equation}
\label{eq:Coboundary}
    B^1(\Gamma; \liesl_2(\bbC)_\rho) = \{b: \Gamma \to \liesl_2(\bbC) \mid \exists \ v \in \liesl_2(\bbC), \ b(\gamma) = (\gamma - 1_\Gamma)\cdot v\}.
\end{equation}
Finally, the group cohomology is defined by 
\begin{equation*}
\label{eq:Cohomology}
H^1(\Gamma;\liesl(\bbC)_\rho) =  Z^1(\Gamma;\liesl_2(\bbC)_\rho)/    B^1(\Gamma; \liesl_2(\bbC)_\rho).   
\end{equation*}

\begin{defi}
\label{def:ZariskiTangentSpace}
Suppose that $V$ is an affine algebraic variety in $\bbC^n$. Let \[I(V) = \{ f \in \bbC[x_1,\dots,x_n] \mid f(x) = 0 \quad \forall x \in V\}\] be the vanishing ideal of $V$. Define the Zariski tangent space to $V$ at $p$ to be the vector space of derivatives of polynomials.
\begin{equation*}
    T^{Zar}_p(V) = \left\{ \frac{d\gamma}{dt}\rvert_{t=0}\in \bbC^n \mid \gamma \in (\bbC[t])^n, \gamma(0) = p \text{ and } f\circ \gamma \in t^2 \bbC[t] \ \forall f \in I(V)\right\}.
\end{equation*}
\end{defi}

It was observed by Weil in \cite{W64} that for any Lie group $H$ and $\rho \in R_H(M)$ the Zariski tangent space embeds in the space of 1-cocycles $Z^1(\pi_1(M);\mathfrak{h})$ where $\mathfrak{h}$ is the Lie algerba of $H$. In particular, we have the following inequalities

\[
\dim Z^1(\Gamma;\liesl_2(\bbC)_\rho) \geq \dim  T^{Zar}_\rho(R(\Gamma)).
\]

\subsection{Formal deformation of representation}

We will review some background materials on formal deformations of representations and integrability of cocycles. The concept of integrable cocycles will be important to building a certain path of representations required in the proof of \cref{thm:Le-Ordering}, see also \cref{lem:AllCocyclesAreIntegrable}. For this discussion, let $\Gamma$ be a finitely presented group, $A_k := \bbR[[t]]/(t^{k+1})$ for $k\in \mathbb{N}$ and $A_\infty := \bbR[[t]]$. Consider the following groups $G_k := \PSL_2(A_k)$ and $G_\infty := \PSL_2(A_\infty)$.     

\begin{defi}
Let $\rho:\Gamma \to G$ be a representation. A \emph{formal deformation of $\rho$} is a representation $\rho_\infty:\Gamma \to G_\infty$ such that $\rho = p_0 \circ \rho_\infty$ where $p_0: G_\infty \to G$ is the homomorphism induced by evaluating the formal power series at $t=0$. 
\end{defi}

For any formal deformation $\rho_\infty:\Gamma \to G_\infty$ of $\rho$, we can write 
\begin{equation}
\label{eq:FormalDef}
\rho_\infty(\gamma) = \exp\left(\sum_{i=1}^\infty t^i u_i(\gamma)\right) \rho(\gamma)
\end{equation}
where $u_i:\Gamma \to \liesl_2(\bbR)_\rho \in C^1(\Gamma;\liesl_2(\bbR)_\rho)$. Since $\rho_\infty$ is a homomorphism, a calculation using the Taylor series for the exponential map implies that $u_1 \in Z^1(\Gamma;\liesl_2(\bbR)_\rho)$. Conversely, we have the following definition:

\begin{defi}
A cocycle $u_1\in Z^1(\Gamma;\liesl_2(\bbR)_\rho)$ is \emph{integrable} if there exists a formal deformation $\rho_\infty$ of $\rho$ given by \cref{eq:FormalDef}. In this case, we say that $\rho_\infty$ is a formal deformation of $\rho$ with leading term $u_1$.  
\end{defi}

Given a representation $\rho: \Gamma \to G$ and a cocycle  $u_1 \in Z^1(\Gamma;\liesl_2(\bbR)_\rho)$, the existence of a formal deformation of $\rho$ with leading term $u_1$ is equivalent to the vanishing of a series of obstruction classes in $H^2(\Gamma;\liesl_2(\bbR)_\rho)$ \cite[Proposition 3.1 and Corollary 3.2]{HPS}. In particular, we have the following proposition from \cite{HPS}:

\begin{prop}
\label{prop:FormalDeformationAndObstructionClass}
Let $\rho \in R_G(\Gamma)$ and $u_i \in C^1(\Gamma;\liesl_2(\bbR)_\rho)$ for $1 \leq i \leq k$ be given. Suppose that we have constructed a representation $\rho_k:=\rho_k^{(\rho;u_1,\dots,u_k)}:\Gamma \to G_k$ given by
\[
\rho_k(\gamma) = \exp\left(\sum_{i=1}^kt^i u_i(\gamma)\right)\rho(\gamma) \mod t^{k+1}.
\]
There exists an obstruction class $\zeta_{k+1}:=\zeta_{k+1}^{(u_1,\dots,u_k)} \in H^2(\Gamma;\liesl_2(\bbR)_\rho)$ with the following properties:
\begin{enumerate}
    \item There is a cochain $u_{k+1}$ such that $\rho_{k+1}^{(\rho;u_1,\dots,u_{k+1})}: \Gamma \to G_{k+1}$ given by \[\rho_{k+1}^{(\rho;u_1,\dots,u_{k+1})} = \exp\left(\sum_{i=1}^{k+1}t^i u_i(\gamma)\right)\rho(\gamma) \mod t^{k+2}\] is a homomorphism if and only if $\zeta_{k+1} = 0$.
    \item The obstruction $\zeta_{k+1}$ is natural in the following sense: if $f:\Gamma' \to \Gamma$ is a homomorphism then
    \[
    f^*\rho_{k}^{(\rho;u_1,\dots,u_k)} = \rho_{k}^{(f^*\rho;f^*u_1,\dots,f^*u_k)}
    \]
    is a homomorphism and $f^*\zeta_{k+1}^{(u_1,\dots,u_k)} = \zeta_{k+1}^{(f^*u_1,\dots,f^*u_k)}$. 
\end{enumerate}
Consequently, an infinite sequence $\{u_i\}_{i=1}^\infty \subset C^1(\Gamma;\liesl_2(\bbR))_\rho$ defines a formal deformation of $\rho$, $\rho_\infty: \Gamma\to G_\infty$ via \cref{eq:FormalDef} if and only if $u_1$ is a cocycle and $\zeta_{k+1}^{(u_1,\dots,u_k)} = 0$ for all $k \geq 1$.  
\end{prop}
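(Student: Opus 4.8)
The plan is to prove the proposition by induction on $k$, extracting the obstruction class directly from the failure of a naive lift of $\rho_k$ to be a homomorphism one power of $t$ further. Fix $u_1,\dots,u_k$ and the homomorphism $\rho_k:\Gamma\to G_k$ as in the statement, and let $\tilde\rho_k:\Gamma\to G_{k+1}$ be the tautological set-theoretic lift given by the \emph{same} formula $\tilde\rho_k(\gamma)=\exp(\sum_{i=1}^k t^i u_i(\gamma))\rho(\gamma)$, now read modulo $t^{k+2}$; this is in general not a homomorphism. Since $\rho_k$ \emph{is} a homomorphism, the quantity $\tilde\rho_k(\gamma)\tilde\rho_k(\gamma')\tilde\rho_k(\gamma\gamma')^{-1}$ is congruent to $1$ modulo $t^{k+1}$, so we may write it as $1+t^{k+1}c(\gamma,\gamma')$ modulo $t^{k+2}$ for a well-defined $2$-cochain $c\in C^2(\Gamma;\liesl_2(\bbR)_\rho)$. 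Here one uses that $\liesl_2$ is the Lie algebra of $G_\bbC$ and that the adjoint action appearing when an $\exp$-factor is moved past $\rho(\gamma)$ is exactly the $\Gamma$-module structure on $\liesl_2(\bbR)_\rho$ --- the higher-order parts of $\tilde\rho_k(\gamma)$ only contribute at order $\geq t^{k+2}$.

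The first, and essentially only nontrivial, step is to check that $c$ is a $2$-\emph{cocycle}. I would expand the triple product $\tilde\rho_k(\gamma_1)\tilde\rho_k(\gamma_2)\tilde\rho_k(\gamma_3)$ modulo $t^{k+2}$, regroup it in the two ways $(\gamma_1\gamma_2)\gamma_3$ and $\gamma_1(\gamma_2\gamma_3)$, substitute the defining relation for $c$, and collect the coefficient of $t^{k+1}$; the lower-order cross terms cancel precisely because $\rho_k$ is already a homomorphism modulo $t^{k+1}$, and what survives is the identity $\gamma_1\cdot c(\gamma_2,\gamma_3)-c(\gamma_1\gamma_2,\gamma_3)+c(\gamma_1,\gamma_2\gamma_3)-c(\gamma_1,\gamma_2)=0$, i.e.\ $\delta c=0$. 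Next I would verify that the class $\zeta_{k+1}:=[c]\in H^2(\Gamma;\liesl_2(\bbR)_\rho)$ is independent of the choices: any two lifts of $\rho_k$ to $G_{k+1}$ differ by $\gamma\mapsto\exp(t^{k+1}b(\gamma))\tilde\rho_k(\gamma)$ for a $1$-cochain $b$, and a short calculation shows this replaces $c$ by $c+\delta b$. For part (1): if $\zeta_{k+1}=0$ then $-c=\delta u_{k+1}$ for some $1$-cochain $u_{k+1}$, and replacing $\tilde\rho_k(\gamma)$ by $\exp(t^{k+1}u_{k+1}(\gamma))\tilde\rho_k(\gamma)$ --- which by the Baker--Campbell--Hausdorff formula equals $\exp(\sum_{i=1}^{k+1}t^i u_i(\gamma))\rho(\gamma)$ modulo $t^{k+2}$, all commutator corrections lying in $t^{k+2}\bbR[[t]]$ --- kills the defect and yields the desired homomorphism $\rho_{k+1}$; conversely, the existence of such a $u_{k+1}$ exhibits $c$ as a coboundary, so $\zeta_{k+1}=0$.

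Part (2) is then formal. For a homomorphism $f:\Gamma'\to\Gamma$ the compatibility $f^*\rho_k^{(\rho;u_1,\dots,u_k)}=\rho_k^{(f^*\rho;f^*u_1,\dots,f^*u_k)}$ is immediate from the defining formulas; the tautological lift of the pulled-back data is literally $\tilde\rho_k\circ f$, so the defect $2$-cochain pulls back, and passing to cohomology gives $f^*\zeta_{k+1}^{(u_1,\dots,u_k)}=\zeta_{k+1}^{(f^*u_1,\dots,f^*u_k)}$. Finally, for the closing equivalence I would argue by induction on $k$: $\rho_1$ is a homomorphism modulo $t^2$ if and only if $u_1$ satisfies the cocycle relation \eqref{eq:FoxDerivatives}, i.e.\ $u_1\in Z^1(\Gamma;\liesl_2(\bbR)_\rho)$; and given a homomorphism $\rho_k$, it extends to a homomorphism $\rho_{k+1}$ for a suitable choice of $u_{k+1}$ exactly when $\zeta_{k+1}^{(u_1,\dots,u_k)}=0$, by part (1). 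A compatible system $\{\rho_k\}_{k\geq 1}$ then assembles, via the inverse limit over the reduction maps $G_{k+1}\to G_k$, into a formal deformation $\rho_\infty:\Gamma\to G_\infty$ of the shape \eqref{eq:FormalDef}.

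The step I expect to be the main obstacle is the bookkeeping inside the cocycle computation: keeping careful track of where the adjoint (module) action enters as the $\exp(t^{k+1}(\cdots))$ factors are commuted past the $\rho(\gamma)$'s, and confirming that all lower-order terms cancel so that the surviving order-$t^{k+1}$ identity is exactly $\delta c$. Everything after $\delta c=0$ is standard homological algebra. A minor point worth keeping honest is that we work in $\PSL_2$ rather than $\mathrm{SL}_2$; but since the Lie algebra and its adjoint action are unchanged under the central quotient, the argument goes through verbatim, and one may even lift $\rho$ to $\mathrm{SL}_2(\bbR)$ locally while carrying out the computation.
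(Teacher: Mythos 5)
Your argument is correct and is essentially the standard construction: the paper itself gives no proof of \cref{prop:FormalDeformationAndObstructionClass}, deferring entirely to \cite[Proposition 3.1 and Corollary 3.2]{HPS} (with the remark that the purely homological argument carries over from $\bbC$ to $\bbR$), and your defect-cochain construction, the verification that $\delta c=0$, the coboundary criterion for extending one order further, and the naturality and inverse-limit steps reproduce exactly that reference's proof. The sign bookkeeping in part (1) (namely $c=-\delta u_{k+1}$) and the use of BCH to absorb the $\exp(t^{k+1}u_{k+1})$ factor are handled correctly, so nothing further is needed.
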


\begin{remark}
\cref{prop:FormalDeformationAndObstructionClass} was stated over $\bbC$ in \cite{HPS}. Since the construction of the obstruction $\zeta_{k+1}$, see \cite[Definition 3.4]{HPS}, and the proof of \cref{prop:FormalDeformationAndObstructionClass} is purely homological, it remains true over $\bbR$. 
\end{remark}


 


\subsection{Holonomy extension locus}
Now we recall some definitions and results about the holonomy extension locus from \cite{gao2019orderability}. The group $G$ acts on $P^1_\bbC$ by Mobius transformation leaving $P^1_\bbR$ invariant. Any nontrivial abelian subgroup of $G$ either contains only parabolic elements and has one fixed point in $P^1_\bbC$ or contains only hyperbolic or elliptic elements and has two fixed points in $P^1_\bbC$. Let $\widetilde{G} = \widetilde{\PSL}_2(\bbR)$ be the universal covering group of $G$. The group $\widetilde{G}$ also acts on $P^1_\bbC$ by pulling back the action of $G$. We say that an element $\widetilde{g}\in\widetilde{G}$ is hyperbolic, parabolic, elliptic, or central if the image of $\widetilde{g}$ in $G$ is hyperbolic, parabolic, elliptic, or trivial, respectively.

We denote by $M$ a compact 3-manifold with a single torus boundary component and define the augmented representation $R_G^{\aug}(M)$ as follows. Since abelian subgroups of $G$ act with global fixed points, we define the augmented representation variety $R_G^{\aug}(M)$ to be the subvariety of $R_G(M) \times P^1_\bbC$ consisting of pairs $(\rho,z)$ where $z$ is a fixed point of $\rho(\pi_1(\partial M))$. Since the action of $\widetilde{G}$ on $P^1_{\bbC}$ comes from pulling back the action of $G$, we can also define $R^{\aug}_{\widetilde{G}}(M)$ to be the real analytic subvariety of $R_{\widetilde{G}}(M)\times P^1_\bbC$ consisting of pairs $(\rho,z)$ where $z$ is a fixed point of $\rho(\pi_1(\partial M))$. Similarly, we define $R^{\aug}_{\widetilde{G}}(\partial M)$ to be the real analytic subvariety of $R_{\widetilde{G}}(\partial M)\times P^1_\bbC$ consisting of pairs $(\rho,z)$ where $z$ is a fixed point of $\rho(\pi_1(\partial M))$.

Given a hyperbolic, parabolic or central element $\tilde{g} \in \widetilde{G}$ with a fixed point $v \in P^1_\bbC$, let $g\in G$ be the image of $\widetilde{g}$ and $a$ be a square root of the derivative of $g$ at $v$. We define  \[\ev(\widetilde{g},v) := (\ln(|a|),\trans(\widetilde{g}))\] where $\trans:\widetilde{G} \to \bbR$ is the translation number given by 
\[
\trans(\widetilde{g})=\lim_{n\to \infty} \frac{\widetilde{g}^n(0)}{n}.
\]
for some $x \in\bbR$. This limit exists for all $\widetilde{g} \in \widetilde{G}$, see \cite[Section 5.1]{Ghys}. It is shown in \cite[Lemma 3.1]{gao2019orderability} that $\ev(-,v)$ is a homomorphism when restricted to hyperbolic or parabolic abelian subgroups of $\widetilde{G}$ fixing $v$. We get a group homomorphism

\[
\ev(\widetilde{\rho}(-),v) : \pi_1(\partial M) \to \bbR \times \bbZ 
\]
for $\widetilde{\rho} \in R^{\aug}_{\widetilde{G}}(\partial M)$ whose image in $\widetilde{G}$ is hyperbolic, parabolic or central. In other words, we can view $\ev(\widetilde{\rho}(-),v)$ as an element of $\Hom(\pi_1(\partial M),\bbR \times \bbZ)$. We are now ready to define the holonomy extension locus.

\begin{defi}
\label{def:EV}
Let $PH_{\widetilde{G}}(M)$ be the subset of $R_{\widetilde{G}}^{\aug}(M)$ whose restriction to $\pi_1(\partial M)$ is either hyperbolic, parabolic or central. Consider the restriction map $i^*: R^{\aug}_{\widetilde{G}}(M)  \to R^{\aug}_{\widetilde{G}}(\partial M)$ induced by the inclusion $i : \partial M \to M$. Define $\EV: i^*(PH_{\widetilde{G}}(M)) \to H^1(\partial M;\bbR) \times H^1(\partial M;\bbZ) $ by 
\begin{equation*}
    (\widetilde{\rho},v) \mapsto \ev((\widetilde{\rho}(-),v)).
\end{equation*}
\end{defi}

\begin{defi}
\label{def:HolonomyExtensionLocus}
Consider the composition 
\begin{equation*}
    PH_{\widetilde{G}}(M) \subset R_{\widetilde{G}}^{\aug}(M) \xrightarrow{i^*} R_{\widetilde{G}}^{\aug}(\partial M) \xrightarrow{\EV} H^1(\partial M; \bbR) \times H^1(\partial M;\bbZ)
\end{equation*}
The closure of $\text{EV} \circ i^*( PH_{\widetilde{G}}(M))$ in $H^1(\partial M; \bbR) \times H^1(\partial M;\bbZ)$ is called the holonomy extension locus of $M$ and denoted $HL_{\widetilde{G}}(M).$
\end{defi}

\begin{defi}
\label{def:Hyp/Para/CentralPoints}
We call a point in $HL_{\widetilde{G}}(M)$ a hyperbolic/parabolic/central point if it comes from a representation $\widetilde{\rho}\in PH_{\widetilde{G}}(M)$ such that $i^*(\widetilde{\rho})$ is hyperbolic/parabolic/central. We call points in $HL_{\widetilde{G}}(M)$ but not in $\text{EV} \circ i^*( PH_{\widetilde{G}}(M))$ ideal points.
\end{defi}

To get concrete coordinates on the holonomy extension locus as well as the Dehn surgery space, let us pick a basis $(\mu, \lambda)$ for $H_1(\partial M;\bbR)$ where $\lambda$ is the homological longitude of $M$. We identify $H^1(\partial M;\bbR)$ with $\bbR^2$ using the dual basis $(\mu^*,\lambda^*)$. Let $L_r$ be the line through the origin in $\bbR^2$ of slope $-r$ where $r \in \bbQ \cup \{\infty\}$. In terms of the dual basis $(\mu^*,\lambda^*)$, the line $L_r$ consists of linear functions that vanish on the primitive element $\gamma$ representing the slope $r$ in $\pi_1(\partial M)$ with respect to the basis $(\mu,\lambda)$. The structure of the holonomy extension locus is summarized as follows:

\begin{thm}\cite[Theorem 3.1]{gao2019orderability}
The holonomy extension locus \[HL_{\tilde{G}}(M) = \bigsqcup_{i,j\in \bbZ} H_{i,j}(M)\] is a locally finite union of analytic arcs and isolated points. Each component $H_{i,j}(M)$ contains at most one parabolic point and has finitely many ideal points locally. The locus $H_{0,0}$ contains the horizontal axis $L_0$, which comes from representations to $\tilde{G}$ with abelian image. 
\end{thm}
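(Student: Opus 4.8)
The plan is to exhibit $HL_{\widetilde{G}}(M)$ as a closed subanalytic subset of $H^1(\partial M;\bbR)\times H^1(\partial M;\bbZ)\cong\bbR^2\times\bbZ^2$ of dimension at most $1$, and then read off its structure from the standard stratification of such sets. By construction $R^{\aug}_{\widetilde{G}}(M)$ is a real analytic subvariety of $R_{\widetilde{G}}(M)\times P^1_\bbC$, and $PH_{\widetilde{G}}(M)$ is semianalytic inside it (hyperbolicity of the boundary holonomy is an open condition, parabolicity a locally closed one, centrality a closed one). On $PH_{\widetilde{G}}(M)$ every element of $\rho(\pi_1(\partial M))$ is hyperbolic, parabolic or central, hence has rotation number $0$ in $G$, so its lift to $\widetilde{G}$ has \emph{integer} translation number; thus $\trans(\widetilde{\rho}(\mu))$ and $\trans(\widetilde{\rho}(\lambda))$ are continuous $\bbZ$-valued and hence locally constant, while $\ln|a|$ is real analytic (a logarithm of the multiplier at $v$). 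Therefore $\EV\circ i^*$ is locally an analytic map with locally constant $H^1(\partial M;\bbZ)$-component, so its image and its closure $HL_{\widetilde{G}}(M)$ are subanalytic, and $H_{i,j}(M)$ is exactly the part on which $(\trans(\widetilde{\rho}(\mu)),\trans(\widetilde{\rho}(\lambda)))=(i,j)$. Once we know $\dim HL_{\widetilde{G}}(M)\le 1$, the local finiteness and the decomposition into analytic arcs and isolated points follow from the existence of a locally finite subanalytic stratification, whose positive-dimensional strata are then arcs.

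The dimension bound is the crux, and it is where the topology of $M$ enters. Because $M$ has a single torus boundary, Poincar\'e--Lefschetz duality (``half lives, half dies'') gives, for any representation $\rho$ of $\pi_1(M)$ with non-central boundary restriction,
\[
\dim\,\mathrm{image}\big(H^1(M;\liesl_2(\bbC)_\rho)\to H^1(\partial M;\liesl_2(\bbC)_\rho)\big)=\tfrac12\dim H^1(\partial M;\liesl_2(\bbC)_\rho)\le 1 ,
\]
since $\dim H^1(\partial M;\liesl_2(\bbC)_\rho)\le 2$ for non-central boundary holonomy. Reading these groups as Zariski tangent spaces to $X(M)$ and $X(\partial M)$, this bounds by $1$ the dimension of the image of the restriction $X(M)\to X(\partial M)$ on the irreducible locus; passing to $\PSL_2(\bbR)$-representations (whose characters restrict into the real points of this image curve), then to their $\widetilde{G}$-lifts (a local analytic isomorphism of representation varieties), then composing with the analytic map $\EV$ (which factors through the boundary character together with the finite choice of fixed point $v$), the image still has dimension $\le 1$. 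On the reducible locus I would argue directly: a reducible representation in $PH_{\widetilde{G}}(M)$ sends the homological longitude $\lambda$ to a unipotent or central element, because $\lambda$ is torsion in $H_1(M;\bbZ)$ and so its diagonal part is a root of unity; hence the multiplier at $v$ of $\rho(\lambda)$ equals $1$, the $H^1(\partial M;\bbR)$-coordinate of the resulting point vanishes on $\lambda$, and the reducible contribution lies in $L_0\times\bbZ^2$, again of dimension $1$. Altogether $\dim HL_{\widetilde{G}}(M)\le 1$. Running the same computation on abelian representations---where $\rho(\lambda)=1$ while $\rho(\mu)$ may be an arbitrary hyperbolic, parabolic or central element, so $\ln|a_\mu|$ sweeps out all of $\bbR$---shows $H_{0,0}(M)\supseteq L_0$.

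For the two finiteness statements: if a point of $HL_{\widetilde{G}}(M)$ is parabolic then $\rho(\mu)$ is parabolic, so $\rho(\lambda)$, commuting with it, lies in the unipotent subgroup and is parabolic or central; either way both $\rho(\mu)$ and $\rho(\lambda)$ have multiplier $1$ at $v$, so the $H^1(\partial M;\bbR)$-coordinate of that point is $0$. Hence the only point of $H_{i,j}(M)$ that can possibly be parabolic is $(0,(i,j))$, which gives ``at most one parabolic point.'' For ideal points, note that an ideal point of $HL_{\widetilde{G}}(M)$ arises as a limit of points of $\EV\circ i^*(PH_{\widetilde{G}}(M))$ along a path that either runs into an ideal point of a curve component of $X(M)$ or escapes to infinity in $R^{\aug}_{\widetilde{G}}(M)$; since $X(M)$ has finitely many curve components, each with finitely many ideal points (Culler--Shalen theory), and since the subanalytic stratification produced above is locally finite, only finitely many ideal points lie in any compact region of $\bbR^2\times\bbZ^2$.

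The step I expect to be the main obstacle is the dimension bound together with the analysis near the closure: one has to make ``half lives, half dies'' give uniform control of the restriction map over a whole positive-dimensional component of the character variety---including its singular points and the degeneration at its ideal points, not just the smooth irreducible locus---and one must check that passing from the $\PSL_2(\bbC)$ character variety to the $\widetilde{\PSL}_2(\bbR)$-augmented representation variety and then to translation-number coordinates neither enlarges the dimension nor destroys subanalyticity. By comparison, ``at most one parabolic point'' is essentially forced by the fact that parabolic and central elements have unit multiplier, and the local finiteness of ideal points is a routine consequence of the affineness of the character curves.
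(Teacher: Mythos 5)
This theorem is quoted from Gao (Theorem~3.1 of \cite{gao2019orderability}) and the paper gives no proof of it, so there is no internal argument to compare against; the relevant comparison is with Gao's proof, which in turn adapts Culler--Dunfield's structure theorem for the translation extension locus. Measured against that, your sketch follows essentially the same route: the decomposition into the pieces $H_{i,j}$ comes from the translation numbers of $\widetilde{\rho}(\mu),\widetilde{\rho}(\lambda)$ being continuous integer-valued (hyperbolic/parabolic/central elements have rotation number $0$), the dimension bound comes from ``half lives, half dies'' applied to $H^1(M;\liesl_2)\to H^1(\partial M;\liesl_2)$ to show the restriction $X(M)\to X(\partial M)$ has at most one-dimensional image, the reducible and central loci are handled separately and land in $L_0$ (respectively the origin), parabolic points are pinned to the origin of each $\bbR^2$-slice because parabolic and central elements have unit multiplier, and $L_0\subseteq H_{0,0}$ is swept out by the hyperbolic abelian representations. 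All of these steps are correct and are the ones used in the source.

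The two places where your sketch is thinner than the actual proof are exactly the ones you flag. First, the subanalyticity of the image: the image of a real analytic set under an analytic map need not be subanalytic without properness, and in the source this is handled by working with the (real points of the) algebraic character variety, where the image of each irreducible component is constructible of dimension equal to the generic rank of the differential; this is also what lets one pass from the tangent-space bound at smooth points to a bound on every component, including its singular points. Second, the local finiteness of ideal points is not quite ``routine'': it is tied to the compactification of the curve components of $X(M)$ and to showing that only finitely many arcs of the locus enter a given compact set, which is where the local finiteness assertion in the statement actually gets proved. Neither issue is a wrong turn---your outline is the standard one---but a complete proof has to supply these two ingredients rather than cite general subanalytic geometry.
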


The holonomy extension locus gives a tool to detect left-orderable Dehn surgies. We have the following lemma: 

\begin{lem}\cite[Lemma 3.8]{gao2019orderability}
\label{lem:OrderingTool}
If $L_r$ intersects the component $H_{0,0}(M)$ of $HL_{\widetilde{G}}(M)$ at non-parabolic and non-ideal points, and assume that $M(r)$ is irreducible, then $M(r)$ is left-orderable.
\end{lem}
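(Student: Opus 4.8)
The plan is to read the hypothesis backwards. By \cref{def:Hyp/Para/CentralPoints}, a \emph{non-ideal} point of $H_{0,0}(M)$ lying on $L_r$ is genuinely in the image of $\EV \circ i^*$, so it comes from an honest augmented representation $(\widetilde\rho, v) \in PH_{\widetilde{G}}(M)$ rather than a limiting point. First I would unwind precisely what it means for $\EV(\widetilde\rho, v)$ to sit on $L_r$ inside the component $H_{0,0}$. Writing $\gamma \in \pi_1(\partial M)$ for the primitive slope-$r$ class, membership in $L_r$ says that the real coordinate $\ln|a|$ of $\ev(\widetilde\rho(\gamma), v)$ vanishes, while membership in the component indexed by $(0,0)$ says that the integer coordinate $\trans(\widetilde\rho(-))$ is the zero class, i.e. $\trans$ vanishes on $\mu$ and $\lambda$. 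Since $\ev(\widetilde\rho(-), v)$ restricts to a homomorphism on the peripheral subgroup by \cite[Lemma 3.1]{gao2019orderability}, these combine to give $\ev(\widetilde\rho(\gamma), v) = (0,0) \in \bbR \times \bbZ$.

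The heart of the argument is then to promote this vanishing to the statement $\widetilde\rho(\gamma) = 1 \in \widetilde{G}$. Since $(\widetilde\rho, v) \in PH_{\widetilde{G}}(M)$, the peripheral image is hyperbolic, parabolic, or central, so the image $\rho(\gamma)$ in $G$ is of one of these types. The vanishing of the $\ln|a|$ coordinate rules out the hyperbolic case, and the non-parabolic hypothesis rules out the parabolic case, leaving $\rho(\gamma)$ trivial in $G$; thus $\widetilde\rho(\gamma)$ lies in the center $Z(\widetilde{G}) \cong \bbZ$. As the generator of the center has translation number $\pm 1$ while $\trans(\widetilde\rho(\gamma)) = 0$, we conclude $\widetilde\rho(\gamma) = 1$. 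Since $\gamma$ normally generates the kernel of $\pi_1(M) \to \pi_1(M(r))$, the representation $\widetilde\rho$ descends to $\overline{\widetilde\rho} : \pi_1(M(r)) \to \widetilde{\PSL}_2(\bbR)$.

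To finish I would convert $\overline{\widetilde\rho}$ into left-orderability. The group $\widetilde{\PSL}_2(\bbR)$ acts faithfully on $\bbR$, the universal cover of $P^1_\bbR$, by orientation-preserving homeomorphisms, so the countable image $\overline{\widetilde\rho}(\pi_1(M(r)))$ embeds in $\mathrm{Homeo}^+(\bbR)$ and is therefore left-orderable by \cite[Theorem 6.8]{Ghys}. Provided this image is nontrivial, $\pi_1(M(r))$ admits a nontrivial homomorphism onto a left-orderable group, and since $M(r)$ is assumed irreducible, \cite[Theorem 1.1]{BRW05} yields that $\pi_1(M(r))$ is left-orderable.

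I expect the main obstacle to be twofold and to reside in the second and third paragraphs. First, the bookkeeping that turns ``on $L_r$ in $H_{0,0}$'' into $\widetilde\rho(\gamma)=1$ must correctly reconcile the normalizations of $\trans$, of the derivative square root $a$, and of the index $(i,j)$ in the decomposition $HL_{\widetilde{G}}(M) = \bigsqcup H_{i,j}(M)$; the case analysis over hyperbolic/parabolic/central types is where sign and lattice conventions can bite. Second, and more conceptually, I must guarantee that $\overline{\widetilde\rho}$ is nontrivial. In the generic case where $i^*\widetilde\rho$ is hyperbolic this is automatic, since $\gamma \mapsto 1$ but the peripheral image is a nontrivial hyperbolic subgroup, forcing a complementary peripheral class to survive in $\pi_1(M(r))$; the central and degenerate points lying over the abelian axis $L_0$ are exactly where care is needed, and there I would argue that a non-ideal intersection point cannot be the trivial character.
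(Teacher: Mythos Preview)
The paper does not prove this lemma; it is quoted verbatim from \cite[Lemma 3.8]{gao2019orderability} and used as a black box in the proof of \cref{thm:Gao-Ordering}. So there is no ``paper's own proof'' to compare against. That said, your outline is the standard argument and is almost certainly what Gao does. The first two paragraphs are correct: non-ideal gives a genuine $(\widetilde\rho,v)\in PH_{\widetilde G}(M)$; the index $(0,0)$ forces $\trans\widetilde\rho(\mu)=\trans\widetilde\rho(\lambda)=0$ and hence $\trans\widetilde\rho(\gamma)=0$; membership in $L_r$ forces the $\ln|a|$ coordinate of $\gamma$ to vanish; non-parabolicity together with $\ln|a|=0$ forces $\rho(\gamma)=1$ in $G$; and then $\trans=0$ kills the central ambiguity, so $\widetilde\rho(\gamma)=1$ and $\widetilde\rho$ descends to $\pi_1(M(r))$.

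The obstacle you flag at the end is real, but your proposed fix is wrong. The trivial representation is a perfectly legitimate non-ideal central point sitting at the origin of $H_{0,0}$, and every line $L_r$ passes through the origin, so ``non-ideal'' alone does not exclude the trivial character. Read literally, the lemma as transcribed here would then say that every irreducible $M(r)$ is left-orderable, which is false. The correct resolution is not that non-ideal points avoid the trivial character, but that in every application (including the proof of \cref{thm:Gao-Ordering} in this paper) the intersection point is produced as the image of a \emph{specified} $\widetilde\rho$ with hyperbolic peripheral image, hence away from the origin; the peripheral subgroup then already maps nontrivially to $\widetilde G$ and \cite[Theorem 1.1]{BRW05} applies immediately. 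So you should read the hypothesis as ``$L_r$ meets $H_{0,0}$ at a point coming from a nontrivial $\widetilde\rho$'', or equivalently for the purposes of this paper, ``at a hyperbolic point''. With that reading your argument is complete.
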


Using the previous lemma, Xinghua Gao gives a criterion in terms of the $\PSL_2(\bbR)$-character variety to produce an interval of left-orderable Dehn surgery around the $0$-filling.

\begin{thm} \cite[Theorem 5.1]{gao2019orderability}
\label{thm:Gao-Ordering}
Suppose that $M$ is a longitudinally rigid irreducible $\bbQ$-homology solid torus and that the Alexander polynomial of $M$ has a simple positive real root $\xi \neq 1$. Then there exists a nonempty interval $(-a,0]$ or $[0,a)$ such that for every rational $r$ in the interval, the Dehn filling $M(r)$ is orderable.
\end{thm}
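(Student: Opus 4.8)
The plan is to locate an explicit analytic arc inside the holonomy extension locus $HL_{\widetilde{G}}(M)$ passing through the point attached to the reducible representation determined by $\xi$, to show that this arc escapes the horizontal axis $L_0$, and then to read off its intersections with the lines $L_r$ and feed them into \cref{lem:OrderingTool}. First I would use $\xi$ to build a non-abelian reducible representation $\rho_\xi\colon\pi_1(M)\to\PSL_2(\bbR)$ with upper-triangular image: because $\xi>0$ is a genuine positive real number we may take the diagonal part to send a chosen meridian $\mu$ to $\mathrm{diag}(\xi^{1/2},\xi^{-1/2})$ and the homological longitude $\lambda$ to the identity (the diagonal part factors through $H_1(M;\bbZ)$, under which $\lambda$ is torsion), while $\Delta_M(\xi)=0$ supplies the nontrivial off-diagonal part. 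The boundary holonomy $\rho_\xi(\pi_1(\partial M))$ is then cyclic hyperbolic, so the canonical lift $\widetilde{\rho_\xi}$ to $\widetilde G$ coming from the simply connected affine group determines a point $x_\xi\in HL_{\widetilde G}(M)$; since $\lambda$ dies we have $\ev(\widetilde{\rho_\xi}(\lambda),v)=(0,0)$, so $x_\xi$ lies on $L_0$ and, by the structure theorem for $HL_{\widetilde G}(M)$, on the component $H_{0,0}(M)$, and it is a hyperbolic, non-parabolic point because $\xi\neq 1$ makes $\ln|\xi^{1/2}|\neq 0$.

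Next I would deform $\rho_\xi$ off the locus of reducible characters. The hypothesis that $\xi$ is a \emph{simple} root of $\Delta_M$ is exactly what is required by the deformation theory of reducible representations (\cite{HPS} and its predecessors, as used in \cite{gao2019orderability}): $\chi_{\rho_\xi}$ is a smooth point of $X(M)$ lying on a one-dimensional component whose generic point is an irreducible character, and one obtains, using that $\xi$ is a positive real number, a real-analytic path $t\mapsto\rho_t$ into $\PSL_2(\bbR)$ with $\rho_0=\rho_\xi$ and $\rho_t$ irreducible for all sufficiently small $t\neq 0$. Since $\rho_t(\mu)$ stays hyperbolic for small $t$, the boundary holonomy of $\rho_t$ is again cyclic hyperbolic, so lifting the path to $\widetilde G$ and recording $\EV\circ i^*$ yields a real-analytic path $t\mapsto\gamma(t)\in HL_{\widetilde G}(M)$ with $\gamma(0)=x_\xi$, whose image lies in $H_{0,0}(M)$ for $t$ near $0$ (the $H^1(\partial M;\bbZ)$-component being constant and equal to $0$ near the hyperbolic point $x_\xi$) and consists of genuine, non-ideal points. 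Writing $\gamma(t)=(x(t),y(t))$ in the $(\mu^*,\lambda^*)$-coordinates on $H^1(\partial M;\bbR)$, the image of $\gamma$ lies in $L_0$ if and only if $y\equiv 0$.

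The heart of the argument, and the step I expect to be the main obstacle, is showing $y\not\equiv 0$. Suppose instead $y\equiv 0$. Then for every $t$ the element $\rho_t(\lambda)$, which commutes with the hyperbolic $\rho_t(\mu)$ and has $|a|=1$, must equal $\mathrm{id}$ in $\PSL_2(\bbR)$; hence the whole family factors through $\pi_1(M(0))$ as a real-analytic family $t\mapsto\bar\rho_t$ with $\bar\rho_t$ irreducible for small $t\neq 0$. An irreducible representation and a reducible one never share a character, so $\chi_{\bar\rho_t}\neq\chi_{\bar\rho_\xi}$ for $t\neq 0$, and therefore $t\mapsto\chi_{\bar\rho_t}$ is a non-constant analytic path in $X(M(0))$ whose image lies in a positive-dimensional component containing irreducible characters — contradicting the longitudinal rigidity of $M$ (\cref{def:LongitudinalRigidity}). (The same conclusion follows from the weaker hypothesis $H^1(\pi_1(M(0));\liesl_2(\bbR)_{\rho_\xi})=0$ of Gao's Question, since then $\dim Z^1(\pi_1(M(0));\liesl_2(\bbR)_{\rho_\xi})=\dim B^1$, which makes $R(M(0))$ coincide near $\bar\rho_\xi$ with the conjugation orbit of $\bar\rho_\xi$ and hence contain no nearby irreducible representations.) Once $y\not\equiv 0$, real-analyticity gives $y(t)=ct^m+O(t^{m+1})$ with $c\neq 0$ and $m\geq 1$; since $x(0)=\ln|\xi^{1/2}|\neq 0$, the ratio $r(t):=-y(t)/x(t)$ is a nonconstant analytic function with $r(0)=0$, so its image over a small interval about $0$ contains a one-sided interval $(-a,0]$ or $[0,a)$.

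For each slope $r=r(t_r)$ in that interval the line $L_r$ meets the image of $\gamma$ at the point $p_r:=\gamma(t_r)$ of $H_{0,0}(M)$, and $p_r\neq x_\xi$ whenever $r\neq 0$ because $x_\xi$ lies on $L_0$ alone; moreover $p_r$ is non-parabolic (a hyperbolic point, for $t_r$ small) and non-ideal (it comes from an actual representation). Since $M$ is irreducible with torus boundary, all but finitely many fillings $M(r)$ are irreducible, so after shrinking $a$ we may assume $M(r)$ is irreducible for every rational $r\neq 0$ in the interval; \cref{lem:OrderingTool} then shows $M(r)$ is left-orderable for all such $r$. The endpoint $r=0$ is harmless, since $M(0)$ has $b_1\geq 1$ ($\lambda$ being rationally nullhomologous) and hence $\pi_1(M(0))$ is left-orderable outright. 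This produces a nonempty interval $(-a,0]$ or $[0,a)$ of left-orderable rational Dehn fillings, which is the assertion of \cref{thm:Gao-Ordering}. The points I have not spelled out — the precise local coordinates on $HL_{\widetilde G}(M)$ at $x_\xi$ in terms of the cocycle computation, and the compatible choice of $\widetilde G$-lift along the path $\rho_t$ — are routine and are carried out in \cite{gao2019orderability}.
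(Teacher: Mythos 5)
Your proposal is correct and follows essentially the same route as the paper: deform the reducible representation attached to the simple positive real root $\xi$ into a real-analytic arc of irreducible $\PSL_2(\bbR)$-representations, lift to $\widetilde{G}$ to get an arc in $H_{0,0}(M)$ through a hyperbolic point on the horizontal axis with nonzero $x$-coordinate, use longitudinal rigidity to rule out the arc lying in $L_0$ (since that would force the irreducible characters to factor through $M(0)$), and conclude via \cref{lem:OrderingTool}. The only cosmetic difference is that you base the deformation at the non-abelian reducible representation $\rho^+_\xi$ rather than at the abelian diagonal representation $\rho_\alpha$ as in \cref{lem:GaoLemma_5_1}; these have the same character and yield the same arc.
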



For completeness, we include the proof of this theorem. The key to the proof of \cref{thm:Gao-Ordering} is to produce an arc in $H_{0,0}(M)$ transverse to the horizontal axis. By construction, this arc does not contain any parabolic or ideal points. \cref{thm:Gao-Ordering} then follows from \cref{lem:OrderingTool}. To construct an arc in $H_{0,0}(M)$, we start by deforming abelian representations coming from the roots of the Alexander polynomial into irreducible representations. In particular, let $\xi$ be a simple positive real root of the Alexander polynomial and $\alpha: \pi_1(M) \to \bbR_+$, the multiplicative group of the real numbers, such that $\alpha$ factors through $H_1(M;\bbZ)_{\text{free}} \cong \bbZ$ and takes a generator of $H_1(M;\bbZ)_{\text{free}}$ to $\xi$. We let $\rho_\alpha:\pi_1(M) \to  G_\bbC$ be the associated diagonal representation given by

\begin{equation}
\label{eq:AlexanderAbelian}    
 \rho_\alpha(\gamma) = \pm \begin{pmatrix}
 \alpha(\gamma)^{1/2} & 0 \\
 0 &  \alpha(\gamma)^{-1/2}
 \end{pmatrix}
\end{equation}
where $\alpha(\gamma)^{1/2}$ is either square root. The condition on the root of the Alexander polynomial allows one to deform $\rho_\alpha =: \rho_0$ into an analytic path of representations $\rho_t :\pi_1(M) \to G$ where $t\in [-1,1]$, see \cite[Lemma 5.1]{gao2019orderability}. Furthermore this path of representations has the following properties.

\begin{lem}\cite[Lemma 5.1]{gao2019orderability}
\label{lem:GaoLemma_5_1}
The path $\rho_t:[-1,1] \to R_G(M)$ constructed above satisfies:
\begin{enumerate}
    \item The representations $\rho_t$ are irreducible over $G_\bbC$ for $t\neq 0$.
    \item The corresponding path $[\rho_t]$ of characters in $X_G(M)$ is also a non-constant analytic path.
    \item The function $\tr^2(\gamma)$ is nonconstant in $t$ for some $\gamma \in \pi_1(\partial M)$. 
\end{enumerate}
\end{lem}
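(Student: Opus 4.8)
The plan is to recall the construction of the path --- in essence the deformation theory of \cite{HPS}, packaged for this situation in \cite[Lemma 5.1]{gao2019orderability} --- and then to extract the three properties, the last of which is where the longitudinal rigidity of $M$ enters. I would first unwind the coefficient module. Since $\rho_0=\rho_\alpha$ is diagonal, the adjoint action of $\pi_1(M)$ on $\liesl_2(\bbR)$ preserves the splitting into the diagonal (Cartan) line, the upper-nilpotent line, and the lower-nilpotent line, on which an element $\gamma$ acts by $1$, $\alpha(\gamma)$, and $\alpha(\gamma)^{-1}$; write $\liesl_2(\bbR)_{\rho_0}=\bbR_0\oplus\bbR_\alpha\oplus\bbR_{\alpha^{-1}}$ accordingly, where $\bbR_\beta$ denotes $\bbR$ with $\pi_1(M)$ acting through $\beta$. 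Because $\alpha$ factors through $H_1(M;\bbZ)_{\mathrm{free}}\cong\bbZ$, a generator going to $\xi$, the twisted homology $H_1(M;\bbR_{\alpha^{\pm1}})$ is the specialization of the Alexander module of $M$ at $t=\xi^{\pm1}$; since $\xi$ --- hence $\xi^{-1}$, by the symmetry of the Alexander polynomial $\Delta_M$ --- is a \emph{simple} root of $\Delta_M$, these groups are $1$-dimensional, and dualizing (using $H_0(M;\bbR_{\alpha^{\pm1}})=0$ as $\xi\neq1$) gives $\dim H^1(\pi_1(M);\bbR_{\alpha^{\pm1}})=1$. The path $\rho_t$ is the deformation of $\rho_0$ whose leading cocycle $u_1\in Z^1(\pi_1(M);\bbR_\alpha)$ spans this line.

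The main obstacle is showing that $u_1$ is integrable and that the resulting path is generically irreducible; this is \cite{HPS}, and I would recall why the obstruction calculus of \cref{prop:FormalDeformationAndObstructionClass} closes up. The quadratic obstruction $\tfrac12[u_1\smile u_1]$ vanishes already at the cochain level, since the Lie bracket vanishes on $\bbR_\alpha$. As $M$ has a single torus boundary, $H^2(M;\bbR)=0$, so the higher obstruction classes live in $H^2(\pi_1(M);\bbR_\alpha)\oplus H^2(\pi_1(M);\bbR_{\alpha^{-1}})\cong\bbR^2$, and the point --- this is where $\Delta_M'(\xi)\neq0$ is used --- is that the correction cochains $u_k$ can be chosen so that all of these obstructions vanish while some $u_k$ acquires a nonzero lower-nilpotent ($\bbR_{\alpha^{-1}}$) component, which forces the deformation off the reducible (Borel) locus and hence makes $\rho_t$ irreducible for $t\neq0$. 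Artin approximation then converts the formal deformation into a convergent real-analytic family, and after reparametrizing $t$ we obtain $\rho_t\colon[-1,1]\to R_G(M)$ with $\rho_0=\rho_\alpha$, irreducible over $G_\bbC$ on a punctured neighborhood of $0$ that we may take to be $[-1,1]\setminus\{0\}$; this is property (1). Property (2) is then formal: for $t\neq0$ the representation $\rho_t$ is irreducible, hence its own semisimplification, whereas $\rho_0$ is diagonal, hence a reducible semisimple representation, and since a character determines the conjugacy class of the semisimplification we get $\chi_{\rho_t}\neq\chi_{\rho_0}$ for $t\neq0$; thus $t\mapsto[\rho_t]$ is non-constant, and it is analytic because $\rho_t$ is analytic in $t$ and $\chi_{\rho_t}(\gamma)=\tr^2(\rho_t(\gamma))$.

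For property (3), suppose toward a contradiction that $t\mapsto\tr^2(\rho_t(\gamma))$ is constant for every $\gamma\in\pi_1(\partial M)$. As $\lambda$ is the homological longitude, $[\lambda]$ is torsion in $H_1(M;\bbZ)$, so --- $\alpha$ factoring through the free part --- $\rho_0(\lambda)=I$ and hence $\tr^2(\rho_t(\lambda))\equiv4$. Moreover $\rho_0(\mu)$ is hyperbolic with $\tr^2(\rho_0(\mu))>4$, its eigenvalues $\xi^{\pm m}$ being distinct positive reals (the image of $[\mu]$ in $H_1(M;\bbZ)_{\mathrm{free}}\cong\bbZ$ is some $m\neq0$); so by hypothesis $\tr^2(\rho_t(\mu))>4$ for all $t$. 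Then each $\rho_t(\mu)$ is diagonalizable with distinct eigenvalues, so its centralizer in $G_\bbC$ is a maximal torus; as $\rho_t(\lambda)$ commutes with $\rho_t(\mu)$ it lies in that torus, and an element of a torus with $\tr^2=4$ is trivial, whence $\rho_t(\lambda)=I$ for all $t$. Thus every $\rho_t$ factors through $\pi_1(M(0))$, so $\{[\rho_t]:t\in[-1,1]\}$ is a non-constant real-analytic arc of characters of $\pi_1(M(0))$ that are irreducible for $t\neq0$; its Zariski closure is positive-dimensional, hence lies in a positive-dimensional component of $X(M(0))$, and that component contains the irreducible character $[\rho_t]$ for any $t\neq0$, contradicting the longitudinal rigidity of $M$ (that $M(0)$ has few characters). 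Therefore $\tr^2(\rho_t(\gamma))$ is non-constant in $t$ for some $\gamma\in\pi_1(\partial M)$.
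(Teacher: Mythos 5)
The paper does not actually prove this lemma---it is imported wholesale from \cite[Lemma 5.1]{gao2019orderability}---so your argument has to stand on its own, and it has two genuine gaps. The first is in your mechanism for property (1). You take the leading cocycle $u_1$ to lie in $Z^1(\pi_1(M);\bbR_\alpha)$, the upper-nilpotent line, and assert that the higher correction cochains ``can be chosen'' to acquire a nonzero lower-nilpotent component, forcing the deformation off the Borel locus. But a cocycle valued in the $\rho_0$-invariant abelian subalgebra $\bbR_\alpha$ integrates on the nose: $\gamma\mapsto\exp(t\,u_1(\gamma))\rho_0(\gamma)$ is already a homomorphism for every $t$ (the cocycle identity plus commutativity of $\bbR_\alpha$ make all obstructions vanish with $u_k=0$ for $k\geq 2$), and it is upper triangular, hence reducible---these are exactly the Burde--de Rham representations. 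The obstruction calculus of \cref{prop:FormalDeformationAndObstructionClass} only guarantees that \emph{some} choice of corrections closes up; it gives no way to force an $\bbR_{\alpha^{-1}}$ component, and with your leading term nothing does. The working route (Heusener--Porti--Su\'arez Peir\'o, and the one this paper itself runs in the proof of \cref{thm:Le-Ordering}) deforms the non-abelian reducible representation $\rho^+_\xi$: simplicity of $\xi$ gives $\dim H^1(\pi_1(M);\liesl_2(\bbR)_{\rho^+_\xi})=1$, hence $\dim Z^1=4>3=\dim B^1$, and integrability of all cocycles (\cref{lem:AllCocyclesAreIntegrable}) makes $\rho^+_\xi$ a smooth point whose component strictly exceeds its conjugation orbit in dimension; irreducibility for $t\neq0$ then comes from the two-curves statement \cite[Proposition 10.2]{HP05_NonAbelianReducible}, not from a choice of correction terms. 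One must also explain why the path can be arranged to start at $\rho_\alpha$ itself, which lies in the closure of the orbit of $\rho^+_\xi$ but not in that orbit.

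The second gap is that your proof of property (3) invokes the longitudinal rigidity of $M$. That is not a hypothesis of the lemma: it is a hypothesis of \cref{thm:Gao-Ordering} used only in the last paragraph of that theorem's proof, and the lemma is reused verbatim in the proof of \cref{thm:Le-Ordering}, where longitudinal rigidity is precisely the hypothesis being removed. If (3) required it, the main theorem of the paper would have no proof. Property (3) must be extracted from the root condition alone, e.g.\ via the injectivity of $H^1(M;\liesl_2(\bbR)_{\rho^+_\xi})\to H^1(\partial M;\liesl_2(\bbR)_{\rho^+_\xi})$ (Poincar\'e duality plus one-dimensionality of the source, as in the proof of \cref{lem:InjectivityOnSecondCohomology}) together with an analysis of the trace functions on $X(\partial M)$ near the hyperbolic restriction $\rho^+_\xi|_{\pi_1(\partial M)}$. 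Your argument for property (2) is fine once (1) is in place.
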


\begin{proof}[Proof of Theorem \ref{thm:Gao-Ordering}] Let $\rho_t$ be the path of representations from \cref{lem:GaoLemma_5_1}. Using this path, we can produce an arc in $H_{0,0}(M)$ as follows. Since $\rho_0$ factors through $H_1(M;\bbZ)_{\text{free}} \cong \bbZ$, we can lift this representation to $\tilde{\rho}_0:\pi_1(M) \to \widetilde{G}$. As the obstruction of lifting a representation from $G$ to $\widetilde{G}$ has discrete values and is continuous on $R_G(M)$, we can lift the path $\rho_t$ to a path $\widetilde{\rho}_t$ in $R_{\widetilde{G}}(M)$. Adjusting $\widetilde{\rho}_0$ by the appropriate central element of $\widetilde{G}$, we can assume that $\trans(\widetilde{\rho}_0(\mu)) = 0$. The image $\rho_0(\lambda)$ is trivial implies that $\trans(\widetilde{\rho}_0(\lambda)) = 0$. Therefore,  $\widetilde{\rho}_0$ is mapped to a point on the horizontal axis of $H_{0,0}(M)$. Since $\xi \neq 1$, the $x$-coordinate of $\widetilde{\rho}_0$, $\ln(|\xi|)$, is nonzero.   

Let $k$ be the index of $[\mu]$ in $H_1(M;\bbZ)_{\text{free}}$ and $\xi \neq 1$ be the positive real root of the Alexander polynomial that corresponds to $\rho_0$. We have $\tr^2(\rho_0(\mu)) = \xi^k + 2 + \xi^{-k} > 4$. Therefore, $\rho_0(\mu)$ is hyperbolic and so is $\widetilde{\rho}_0(\mu)$. This implies that the image of the abelian subgroup $\langle \mu,\lambda\rangle$ under $\widetilde{\rho}_0$ contains only hyperbolic elements and that $\widetilde{\rho}_0\in PH_{\widetilde{G}}(M)$. Since being hyperbolic is an open condition, there exists $\varepsilon>0$ such that $\widetilde{\rho}_t \in PH_{\widetilde{G}}(M)$ for all $t \in[-\varepsilon,\varepsilon]$. The path $\widetilde{\rho}_t$ restricted to $[-\varepsilon,\varepsilon]$ projects to a path $A$ in $H_{0,0}(M)$. The projection $A$ must be non-constant since $[\rho_t]$ is a non-constant path on the character variety $X_G(M)$.

Finally, we use the hypothesis that $M$ is longitudinally rigid to argue that $A$ is not contained in the horizontal axis of $H_{0,0}(M)$. If $A$ is contained in $L_0$, then $\ln|a| = 0$ where $a$ is the eigenvalue of $\rho_t(\lambda)$. Since $\rho_t(\lambda)$ is either hyperbolic or trivial, we must have $\rho_t(\lambda)$ is trivial in $G$. Therefore, the representation $\rho_t$ factors through $M(0)$. We get a non-constant path $[\rho_t]$ in $X(M(0))$ of irreducible characters when $t\neq 0$. However, this contradicts the assumption that $M$ is longitudinally rigid. 
\end{proof}



\begin{remark}
The condition that $M$ is longitudinally rigid ensures that the representation $\rho_t$ obtained by deforming the abelian representation $\rho_0$ does not factor through the longitudinal filling. We can weaken this hypothesis by a local condition at the non-abelian reducible representation $\rho^+_\xi$ that corresponds to a root $\xi$ of the Alexander polynomial.
\end{remark}

Recall that, we have the following theorem of Burde and de Rham:
\begin{thm}[\cite{Burde67} and \cite{dR67}]
\label{thm:BurdeAnddeRham}
Let $\alpha:\pi_1(M) \to \bbC^*$ be a representation and define $\rho_\alpha$ as in \cref{eq:AlexanderAbelian}. Then there exists a reducible, non-abelian representation $\rho^+_\xi:\pi_1(M) \to \PSL_2(\bbC)$ such that $[\rho^+_\xi] = [\rho_\alpha]$ in $X(M)$ if and only if $\alpha$ factors through $H_1(M;\bbZ)_{\text{free}} \cong \bbZ$ sending a generator to the root $\xi$ of the Alexander polynomial of $M$.
\end{thm}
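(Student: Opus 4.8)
The plan is to reduce the existence of the reducible, non-abelian representation $\rho^+_\xi$ to the non-vanishing of a twisted first cohomology group of $\pi_1(M)$, and then to identify that group with the Alexander invariant of $M$ by Fox calculus.

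First I would put any reducible $\rho:\pi_1(M)\to\PSL_2(\bbC)$ with $[\rho]=[\rho_\alpha]$ into triangular form. Being reducible, $\rho$ fixes a point of $P^1_\bbC$, so after conjugation its image lies in the Borel subgroup, and we write $\rho(\gamma)=\pm\begin{pmatrix} a(\gamma) & b(\gamma) \\ 0 & a(\gamma)^{-1}\end{pmatrix}$, the diagonal part $\gamma\mapsto\pm\begin{pmatrix} a(\gamma) & 0 \\ 0 & a(\gamma)^{-1}\end{pmatrix}$ being itself a representation. Comparing $\tr^2$ on each element and using that $\gamma\mapsto a(\gamma)^2$ is a homomorphism into the abelian group $\bbC^*$ forces, after possibly interchanging the two eigenlines of $\rho$, that $a(\gamma)^2=\alpha(\gamma)$ for all $\gamma$; in particular $\alpha$ factors through $H_1(M;\bbZ)$. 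Setting $c(\gamma):=a(\gamma)\,b(\gamma)$ — a quantity unaffected by the overall sign — the requirement that $\rho$ be a homomorphism becomes the cocycle identity
\[
c(\gamma\delta)=c(\gamma)+\alpha(\gamma)\,c(\delta)\qquad\text{for all }\gamma,\delta\in\pi_1(M),
\]
i.e. $c\in Z^1(\pi_1(M);\bbC_\alpha)$, where $\bbC_\alpha$ is $\bbC$ with $\pi_1(M)$ acting through multiplication by $\alpha$. Conjugating $\rho$ within the Borel subgroup rescales $c$ by a nonzero constant and adds a coboundary, and $\rho$ is abelian (conjugate into the diagonal torus) precisely when $c$ is a coboundary; conversely, any $\alpha$ arising from a root of the Alexander polynomial together with a cocycle $c$ recovers such a $\rho$ over $\PSL_2(\bbC)$. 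Hence a reducible, non-abelian $\rho^+_\xi$ with $[\rho^+_\xi]=[\rho_\alpha]$ exists if and only if $H^1(\pi_1(M);\bbC_\alpha)\neq 0$.

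Second I would compute $H^1(\pi_1(M);\bbC_\alpha)$. Fix a deficiency-one presentation $\pi_1(M)=\langle x_1,\dots,x_n\mid r_1,\dots,r_{n-1}\rangle$, possible since $\chi(M)=0$. A cocycle is determined by its values on the generators subject to $c(r_j)=0$, and the product rule gives $c(r_j)=\sum_i\phi_\alpha(\partial r_j/\partial x_i)\,c(x_i)$ for the ring homomorphism $\phi_\alpha:\bbZ[\pi_1(M)]\to\bbC$ induced by $\alpha$. Thus $Z^1(\pi_1(M);\bbC_\alpha)=\ker J_\alpha$ for the evaluated Jacobian $J_\alpha=\big(\phi_\alpha(\partial r_j/\partial x_i)\big)$, an $(n-1)\times n$ matrix over $\bbC$, while $B^1(\pi_1(M);\bbC_\alpha)$ is the line spanned by $\big(\alpha(x_1)-1,\dots,\alpha(x_n)-1\big)$. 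The fundamental Fox identity $\sum_i(\partial r_j/\partial x_i)(x_i-1)=r_j-1$, evaluated under $\phi_\alpha$, shows this line already lies in $\ker J_\alpha$, so when $\alpha\neq 1$ one has $\dim H^1(\pi_1(M);\bbC_\alpha)=\dim\ker J_\alpha-1$, and $H^1\neq 0$ iff every $(n-1)\times(n-1)$ minor of $J_\alpha$ vanishes. When $\alpha$ factors through $H_1(M;\bbZ)_{\text{free}}\cong\bbZ=\langle t\rangle$ with $t\mapsto\xi$, the matrix $J_\alpha$ is the Alexander matrix of $M$ evaluated at $t=\xi$, its $(n-1)$-minors generate the first elementary ideal, and their common vanishing at $\xi$ is equivalent to $\Delta_M(\xi)=0$; since $\Delta_M(1)=\pm|\operatorname{Tor}H_1(M;\bbZ)|\neq 0$, any such root is automatically $\neq 1$, so $B^1$ is genuinely one-dimensional, as used above. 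Finally, to see that $\alpha$ must factor through the free part at all, I would run the analogous computation over $\bbZ[H_1(M;\bbZ)]$ — equivalently, compute $H^1(M;\bbC_\alpha)$ from the Alexander module of the maximal free abelian cover via universal coefficients over the PID $\bbC[t^{\pm1}]$ — and check that the twisted first Betti number vanishes whenever the torsion part of $\alpha$ is nontrivial.

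The heart of the matter, and the step I expect to be the main obstacle, is the second one: making the passage ``homomorphism $\Leftrightarrow$ cocycle'' genuinely clean over $\PSL_2$ rather than $\mathrm{SL}_2$ — this is exactly what forces the use of $c(\gamma)=a(\gamma)b(\gamma)$ in place of $b(\gamma)$, since no coherent square root of $\alpha$ need exist — and then matching $\dim H^1(\pi_1(M);\bbC_\alpha)$ with the corank of the Alexander matrix, in particular exhibiting the tautological coboundary inside $\ker J_\alpha$ coming from the Fox identity and disposing of the degenerate cases ($\xi=1$, and $\alpha$ with nontrivial torsion part) so that the equivalence becomes an honest ``if and only if'' in exactly the asserted form. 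The remaining ingredients — reducing to the Borel subgroup, and translating the vanishing of the first elementary ideal at $\xi$ into $\Delta_M(\xi)=0$ — are standard.
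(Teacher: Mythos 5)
The paper offers no proof of this theorem at all --- it is imported wholesale from Burde and de Rham --- so there is nothing in the text to compare your argument against, and it must be judged on its own. On its own it is the standard proof: your two main steps are sound. The device $c(\gamma)=a(\gamma)b(\gamma)$, which is insensitive to the sign ambiguity and satisfies $c(\gamma\delta)=c(\gamma)+\alpha(\gamma)c(\delta)$, is exactly the right way to identify conjugacy classes of non-abelian reducible representations with character $\chi_{\rho_\alpha}$ with nonzero classes in $H^1(\pi_1(M);\bbC_\alpha)$ over $\PSL_2(\bbC)$ rather than $\SL_2(\bbC)$; and the Fox-calculus computation (the fundamental identity placing the coboundary line inside $\ker J_\alpha$, the hypothesis $\alpha\neq 1$ guaranteeing $\dim B^1=1$, and the equivalence of the vanishing of all $(n-1)\times(n-1)$ minors at $\xi\neq 1$ with $\Delta_M(\xi)=0$) is correct as stated.

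The one genuine gap is the final step, which you only gesture at: showing that \emph{no} non-abelian reducible representation shares the character of $\rho_\alpha$ when $\alpha$ does not factor through $H_1(M;\bbZ)_{\text{free}}$. For a knot in $S^3$ this is vacuous, but for a general $\bbQ$-homology solid torus $H_1(M;\bbZ)$ has torsion, $\bbC[H_1(M;\bbZ)]$ decomposes into isotypic summands indexed by characters $\chi$ of the torsion subgroup, and $H^1(\pi_1(M);\bbC_\alpha)$ is governed by the $\chi$-component of the Alexander module for $\chi=\alpha$ restricted to the torsion --- a module with no a priori reason to vanish. So ``the twisted first Betti number vanishes whenever the torsion part of $\alpha$ is nontrivial'' is not a routine check; it is either false in this generality or true only under a convention about which order ideal is meant by ``the Alexander polynomial of $M$.'' You should either supply an actual argument for this vanishing, or restrict the ``only if'' direction to representations $\alpha$ that already factor through the free quotient as in \cref{eq:AlexanderAbelian} --- which is the only case the paper ever uses.
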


\begin{defi}
\label{def:loclongitudinallyrigid}
Suppose that $M$ be an irreducible $\bbQ$-homology solid torus. Let $\xi$ be a root of the Alexander polynomial of $M$ and $\rho^+_\xi$ be a non-abelian reducible representation associated to $\xi$. We say that $M$ is \emph{locally longitudinally rigid at $\xi$} if
\begin{equation*}
    H^1(M(0);\mathfrak{sl}_2(\bbC)_{\rho^+_\xi}) = 0.
\end{equation*}
\end{defi}



Before proving \cref{thm:Le-Ordering}, we need the following lemmas from \cite{HP05_NonAbelianReducible} in the real setting. We include the proof of these lemmas for completeness. 

\begin{lem}
\label{lem:InjectivityOnSecondCohomology}
Let $\xi$ be a simple positive real root of the Alexander polynomial that is not 1 and 
\[\phi:=\rho^+_\xi: \pi_1(M) \to \PSL_2(\bbR)\] be a non-abelian reducible representation that corresponds to $\xi$. Then the map \[H^2(\pi_1(M);\liesl_2(\bbR)_\phi) \to H^2(\pi_1(\partial M);\liesl_2(\bbR)_\phi)\] induced by the inclusion $\pi_1(\partial M) \hookrightarrow \pi_1(M)$ is injective. 
\end{lem}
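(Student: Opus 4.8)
The plan is to trade the statement about $H^2$ for the corresponding statement about $H^1$ via Poincaré--Lefschetz duality, and then to prove the $H^1$ statement by a cohomology computation organised around the upper-triangular structure of $\phi=\rho^+_\xi$. For the reduction, note that since $\phi$ takes values in $\PSL_2(\bbR)$ the Killing form is a nondegenerate $\pi_1(M)$-invariant symmetric form on $\liesl_2(\bbR)_\phi$, so this module is self-dual; because $\bbR$ is a field the evaluation pairing $H_k(-;\liesl_2(\bbR)_\phi)\times H^k(-;\liesl_2(\bbR)_\phi)\to\bbR$ is perfect and natural, and Poincaré--Lefschetz duality identifies the cohomology long exact sequence of the pair $(M,\partial M)$ with its homology long exact sequence. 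Chasing these identifications, $i^*\colon H^2(\pi_1(M);\liesl_2(\bbR)_\phi)\to H^2(\pi_1(\partial M);\liesl_2(\bbR)_\phi)$ is injective iff the natural map $H_1(M;\liesl_2(\bbR)_\phi)\to H_1(M,\partial M;\liesl_2(\bbR)_\phi)$ vanishes, iff $i_*\colon H_1(\partial M;\liesl_2(\bbR)_\phi)\to H_1(M;\liesl_2(\bbR)_\phi)$ is surjective, iff (dualizing over $\bbR$) $i^*\colon H^1(\pi_1(M);\liesl_2(\bbR)_\phi)\to H^1(\pi_1(\partial M);\liesl_2(\bbR)_\phi)$ is injective. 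So I would reduce to proving this last injectivity.

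Next I would introduce the filtration. By \cref{thm:BurdeAnddeRham}, up to conjugacy $\phi$ is upper triangular with diagonal part $\rho_\alpha$ as in \cref{eq:AlexanderAbelian}, where $\alpha$ factors through $H_1(M;\bbZ)_{\mathrm{free}}\cong\bbZ$ and sends a generator to $\xi$. Then $\liesl_2(\bbR)_\phi$ carries a $\pi_1(M)$-invariant filtration $0\subset\bbR_\alpha\subset B\subset\liesl_2(\bbR)_\phi$, with $B$ the traceless upper-triangular matrices, whose successive quotients are $\bbR_\alpha$, the trivial module $\bbR$, and $\bbR_{\alpha^{-1}}$ (here $\bbR_\alpha$ is $\bbR$ with $\gamma$ acting by multiplication by $\alpha(\gamma)$, and the middle quotient is trivial because conjugation preserves the diagonal modulo the nilpotent part). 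Since $\xi\neq1$, $\alpha$ is nontrivial, and in fact nontrivial on the meridian $\mu$ while trivial on the longitude $\lambda$; hence $\alpha^{\pm1}(\mu)-1\neq0$, which forces $H^0=0$ and, by Poincaré duality on the torus, $H^2=0$ for $\bbR_{\alpha^{\pm1}}$ over $\pi_1(\partial M)\cong\bbZ^2$, and then $H^1=0$ by vanishing of the Euler characteristic. Feeding this through the two short exact sequences of the filtration gives $H^*(\pi_1(\partial M);\liesl_2(\bbR)_\phi)\cong H^*(T^2;\bbR)$; in particular $\dim H^1(\pi_1(\partial M);\liesl_2(\bbR)_\phi)=2$.

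The core computation is $\dim H^1(\pi_1(M);\liesl_2(\bbR)_\phi)=1$. Over $M$ one has $H^{\geq2}(\pi_1(M);\bbR)=0$ (as $M$ is a $\bbQ$-homology solid torus), $\dim H^1(\pi_1(M);\bbR)=1$, $H^0(\pi_1(M);\bbR_{\alpha^{\pm1}})=0$, $\dim H^1(\pi_1(M);\bbR_{\alpha^{\pm1}})=1$ (a consequence of $\xi$ — and by symmetry of the Alexander polynomial also $\xi^{-1}$ — being a \emph{simple} root), and hence $\dim H^2(\pi_1(M);\bbR_{\alpha^{\pm1}})=1$ by Euler characteristic together with $H^{\geq3}=0$. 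Plugging these into the long exact sequence of $0\to\bbR_\alpha\to B\to\bbR\to0$: the connecting maps are cup product with the extension class $e_B\in H^1(\pi_1(M);\bbR_\alpha)$, and $e_B\neq0$ precisely because $\phi$ is non-abelian (if the extension split, $\phi$ would be conjugate to its diagonal part, hence abelian), so $H^0(\pi_1(M);\bbR)\to H^1(\pi_1(M);\bbR_\alpha)$ is an isomorphism $\bbR\to\bbR$, and the simple-root hypothesis makes the next connecting map $H^1(\pi_1(M);\bbR)\to H^2(\pi_1(M);\bbR_\alpha)$ an isomorphism as well (in Alexander-module terms this is the natural map from the $(t-\xi)$-coinvariants to the $(t-\xi)$-invariants of the Alexander module, which is bijective exactly when $\xi$ is a simple zero of the order ideal). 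Hence $H^0(\pi_1(M);B)=H^1(\pi_1(M);B)=H^2(\pi_1(M);B)=0$, and the long exact sequence of $0\to B\to\liesl_2(\bbR)_\phi\to\bbR_{\alpha^{-1}}\to0$ gives $H^0(\pi_1(M);\liesl_2(\bbR)_\phi)=0$ and $\dim H^1(\pi_1(M);\liesl_2(\bbR)_\phi)=\dim H^1(\pi_1(M);\bbR_{\alpha^{-1}})=1$. Now the ``half lives, half dies'' principle (a standard consequence of Poincaré duality) says the image of $i^*\colon H^1(\pi_1(M);\liesl_2(\bbR)_\phi)\to H^1(\pi_1(\partial M);\liesl_2(\bbR)_\phi)$ is Lagrangian for the skew-symmetric cup-product form on $H^1(\partial M;\liesl_2(\bbR)_\phi)$, so it has dimension $\frac{1}{2}\dim H^1(\partial M;\liesl_2(\bbR)_\phi)=1$; since the domain has dimension $1$ this map is injective, and by the first paragraph the lemma follows.

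The step I expect to be the main obstacle is the claim that $H^1(\pi_1(M);\bbR)\to H^2(\pi_1(M);\bbR_\alpha)$ is an isomorphism, equivalently $\dim H^1(\pi_1(M);\liesl_2(\bbR)_\phi)=1$: this is exactly where ``simple root'' is used rather than merely ``root'', and it requires a little care with the Alexander module (a priori the dimension could be $2$, and one must rule this out). Alternatively one can invoke the known fact that, when $\xi$ is a simple root of the Alexander polynomial, $\rho^+_\xi$ is a smooth reduced point of the $\PSL_2(\bbC)$-representation scheme $R(M)$ of local dimension $4$; then $\dim Z^1=\dim T^{Zar}_{\rho^+_\xi}R(M)=4$ and $\dim B^1=3-\dim H^0=3$, so $\dim H^1=1$ directly (dimensions being insensitive to the ground field here). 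Everything else — the cohomology of $\bbZ^2$, the long exact sequences of the filtration, Poincaré--Lefschetz duality, and half-lives-half-dies — is routine, and the passage from the $\bbC$-setting of \cite{HP05_NonAbelianReducible} to the $\bbR$-setting changes nothing, as all the arguments are purely homological.
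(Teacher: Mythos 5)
Your argument is correct in substance and ultimately rests on the same two inputs as the paper's proof: the dimension counts $\dim H^1(\partial M;\liesl_2(\bbR)_\phi)=2$ and $\dim H^1(M;\liesl_2(\bbR)_\phi)=1$, fed into Poincar\'e--Lefschetz duality for the pair $(M,\partial M)$. The packaging, however, is genuinely different. The paper stays in degree $2$: it gets $H^0(\partial M;\liesl_2(\bbR)_\phi)\cong\liesl_2(\bbR)^{\phi(\pi_1(\partial M))}\cong\bbR$ directly from nontriviality of $\phi|_{\pi_1(\partial M)}$, quotes \cite[Corollary 5.4]{HP05_NonAbelianReducible} for $H^1(M;\liesl_2(\bbR)_\phi)\cong\bbR$, observes that the segment $H^1(M)\to H^1(\partial M)\to H^2(M,\partial M)$ with dimensions $1,2,1$ is forced to be short exact, and reads off injectivity of $H^2(M)\to H^2(\partial M)$ from the next stage of the long exact sequence, finally descending to group cohomology via the injection $H^2(\pi_1(M);V)\hookrightarrow H^2(M;V)$ of \cite[Lemma 3.1]{HP05_NonAbelianReducible}. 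You instead dualize the whole problem down to degree $1$ and finish with half-lives-half-dies, re-deriving both dimension counts from the upper-triangular filtration $0\subset\bbR_\alpha\subset B\subset\liesl_2(\bbR)_\phi$; this buys self-containedness at the cost of length. Two cautions. First, the one delicate step in your filtration computation --- that cup product with the extension class gives an isomorphism $H^1(M;\bbR)\to H^2(M;\bbR_\alpha)$ when $\xi$ is a simple root --- is asserted rather than proved, and it is precisely the content of \cite[Corollary 5.4]{HP05_NonAbelianReducible}; you should either cite that (as the paper does) or use your fallback via smoothness of $R(M)$ at $\rho^+_\xi$, which also works. Second, Poincar\'e--Lefschetz duality applies to $H^2(M;-)$, not to $H^2(\pi_1(M);-)$, so your opening chain of equivalences needs either the asphericity of $M$ (available here since $M$ is irreducible with infinite fundamental group) or the general injectivity of $H^2(\pi_1(M);V)\to H^2(M;V)$. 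A final simplification: the half-lives-half-dies input is not actually needed, since once the three dimensions in the segment above are $1,2,1$, exactness at the middle term alone forces the restriction map out of $H^1(M)$ to have rank $1$ and hence to be injective.
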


\begin{proof}
We have $\phi|_{\pi_1(\partial M)}$ is nontrivial since $\tr^2(\phi(\mu)) = \xi^k + 2 + \xi^{-k} > 4$ where $k$ is the index $\langle[\mu]\rangle$ in $H_1(M;\bbZ)_{\text{free}}$. Since $\partial M$ is aspherical, we have $H^*(\partial M;\liesl_2(\bbR)_\phi) \cong H^*(\pi_1(\partial M);\liesl_2(\bbR)_\phi)$. Since $\phi|_{\pi_1(\partial M)}$ is non-trivial, we have 
\[H^0(\partial M;\liesl_2(\bbR)_\phi) \cong \liesl_2(\bbR)^{\phi(\pi_1(\partial M))}\cong \bbR.\]
By duality and Euler characteristic, we have 
\[
H^2(\partial M;\liesl_2(\bbR)_\phi) \cong \bbR 
\quad \text{and} \quad
H^1(\partial M;\liesl_2(\bbR)_\phi) \cong \bbR^2.
\]
Since $\xi$ is a simple root of the Alexander polynomial, \cite[Corollary 5.4]{HP05_NonAbelianReducible} gives that \[H^1(M;\liesl_2(\bbR)_\phi)\cong H^1(\pi_1(M);\liesl_2(\bbR)_\phi) \cong \bbR.\] By duality, we have
\[
H^2(M,\partial M;\liesl_2(\bbR)_\phi) \cong H^1(M;\liesl_2(\bbR)_\phi) \cong \bbR.
\]Therefore, the following segment of the long exact sequence of pair for $(M,\partial M)$
\[
H^1(M;\liesl_2(\bbR)_\phi) \to H^1(\partial M;\liesl_2(\bbR)_\phi) \to 
H^2(M,\partial M;\liesl_2(\bbR)_\phi)
\]
is short exact. Therefore from the long exact sequence of pair for $(M,\partial M)$ we see that the map
\[
H^2(M;\liesl_2(\bbR)_\phi) \to H^2(\partial M;\liesl_2(\bbR)_\phi) 
\]
is injective. The conclusion of the lemma follows from the following commutative diagram
\begin{center}
\begin{tikzcd}
H^2(M;\liesl_2(\bbR)_\phi) \arrow[r,hook] & H^2(\partial M;\liesl_2(\bbR)_\phi)\\
H^2(\pi_1(M);\liesl_2(\bbR)_\phi) \arrow[r]\arrow[u] & H^2(\pi_1(\partial M);\liesl_2(\bbR)_\phi)\arrow[u, "\cong"]
\end{tikzcd}    
\end{center}
and the fact that $H^2(\pi_1(M);\liesl_2(\bbR)_\phi) \to H^2(M;\liesl_2(\bbR)_\phi)$ is injective, see \cite[Lemma 3.1]{HP05_NonAbelianReducible}. 
\end{proof}

\begin{lem}
\label{lem:AllCocyclesAreIntegrable}
Let $\xi$ be a simple positive real root of the Alexander polynomial that is not 1 and 
\[\phi:=\rho^+_\xi: \pi_1(M) \to \PSL_2(\bbR)\] be a non-abelian reducible representation that corresponds to $\xi$. All cocycles in $Z^1(\pi_1(M);\liesl_2(\bbR)_\phi)$ are integrable. 
\end{lem}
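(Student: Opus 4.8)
The plan is to combine the naturality of the obstruction classes in \cref{prop:FormalDeformationAndObstructionClass} with the injectivity statement of \cref{lem:InjectivityOnSecondCohomology}. Write $\psi:=\phi|_{\pi_1(\partial M)}$ and let $i\colon\pi_1(\partial M)\hookrightarrow\pi_1(M)$ denote the inclusion. Given an arbitrary $u_1\in Z^1(\pi_1(M);\liesl_2(\bbR)_\phi)$ I would construct cochains $u_2,u_3,\dots$ inductively so that each $\rho_k^{(\phi;u_1,\dots,u_k)}\colon\pi_1(M)\to G_k$ is a homomorphism; by \cref{prop:FormalDeformationAndObstructionClass} the passage from $k$ to $k+1$ is possible exactly when $\zeta_{k+1}^{(u_1,\dots,u_k)}=0$ in $H^2(\pi_1(M);\liesl_2(\bbR)_\phi)$. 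Since the restriction of $\rho_k^{(\phi;u_1,\dots,u_k)}$ to $\pi_1(\partial M)$ is the homomorphism $\rho_k^{(\psi;i^*u_1,\dots,i^*u_k)}$, naturality gives $i^*\zeta_{k+1}^{(u_1,\dots,u_k)}=\zeta_{k+1}^{(i^*u_1,\dots,i^*u_k)}$, and by \cref{lem:InjectivityOnSecondCohomology} the map $i^*$ is injective on $H^2$. Thus it is enough to show that every partial deformation $\rho_k^{(\psi;w_1,\dots,w_k)}\colon\pi_1(\partial M)\to G_k$ of $\psi$ extends one further step, i.e.\ that all obstruction classes attached to $\psi$ vanish.

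For this I would prove that $\psi$ is a \emph{smooth} point of the representation variety $R_G(\partial M)=\{(X,Y)\in G\times G:XY=YX\}$; formal smoothness then supplies the required lifts of partial deformations and forces the obstructions of $\psi$ to vanish. Because $\xi\neq 1$ is a simple positive real root of the Alexander polynomial, $\tr^2(\psi(\mu))=\xi^k+2+\xi^{-k}>4$ (with $k$ the index of $[\mu]$ in $H_1(M;\bbZ)_{\mathrm{free}}$), so $\psi(\mu)$ is hyperbolic, hence a regular semisimple element whose centralizer is the unique maximal torus $T_0$ containing it. As $\psi$ has abelian image containing $\psi(\mu)$, that image lies in $T_0$; moreover the diagonal part $\rho_\alpha$ of $\phi$ sends $\lambda$ to the identity since $\lambda$ is rationally nullhomologous, and an element of $T_0$ with that diagonal part is trivial, so $\psi(\lambda)=1$. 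Near $(\psi(\mu),1)$ the first coordinate of a commuting pair is hyperbolic with $1$-dimensional centralizer torus $T_X$, the second coordinate lies in $T_X$, and $T_X$ varies real-analytically with $X$, so a neighbourhood of $\psi$ in $R_G(\partial M)$ is a smooth $4$-dimensional set. On the other hand, the weight decomposition $\liesl_2(\bbR)_\psi=\mathfrak{h}\oplus\bbR E\oplus\bbR F$ for the diagonal $\psi(\mu)$ (with $\psi(\lambda)$ acting trivially) yields $\dim Z^1(\pi_1(\partial M);\liesl_2(\bbR)_\psi)=4$; so via Weil's inclusion $T^{Zar}_\psi R_G(\partial M)\hookrightarrow Z^1$ together with the bound $\dim T^{Zar}_\psi\ge 4$ coming from the $4$-dimensional model, the Zariski tangent space has dimension exactly $4$, whence $R_G(\partial M)$ is reduced and smooth at $\psi$. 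A self-contained alternative avoiding this bookkeeping is to write $\psi(\mu)=\exp(\mu_0H)$ with $H=\mathrm{diag}(1,-1)$ and check directly that $\rho_t(\mu)=\exp(t\gamma)\exp((\mu_0+ta)H)\exp(-t\gamma)$, $\rho_t(\lambda)=\exp(t\gamma)\exp(tbH)\exp(-t\gamma)$ is a homomorphism for all $t$ whose leading term sweeps out all of $Z^1(\pi_1(\partial M);\liesl_2(\bbR)_\psi)$ as $\gamma\in\liesl_2(\bbR)$ and $a,b\in\bbR$ vary; this realizes every boundary cocycle as integrable and recovers the smoothness above.

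Assembling these pieces completes the induction and shows that $u_1$ is integrable. I expect the real work to be in the second step: confirming that the $4$-dimensional commuting-variety model genuinely describes a neighbourhood of $\psi$ in $R_G(\partial M)$ and that the variety is reduced there, for which the cohomological count $\dim Z^1(\pi_1(\partial M);\liesl_2(\bbR)_\psi)=4$ is the essential input. The remaining ingredients—naturality of the obstruction classes, the injectivity already proved in \cref{lem:InjectivityOnSecondCohomology}, and the inductive extension of the cochains $u_k$—are then formal.
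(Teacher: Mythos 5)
Your argument is correct and follows essentially the same route as the paper: you reduce, via naturality of the obstruction classes and the injectivity of $i^*$ on $H^2$ from \cref{lem:InjectivityOnSecondCohomology}, to the vanishing of all obstructions for $\psi=\phi|_{\pi_1(\partial M)}$, which is supplied by $\psi$ being a smooth point of $R_G(\bbZ^2)$ of local dimension four. The only difference is that you prove this smoothness (and the formal lifting it provides) directly from the commuting-variety description together with the count $\dim Z^1(\pi_1(\partial M);\liesl_2(\bbR)_\psi)=4$, whereas the paper obtains the same two facts by citing \cite[Lemma 7.4]{HP05_NonAbelianReducible} and \cite[Lemma 3.7]{HPS}.
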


\begin{proof}
As noted in the proof of \cref{lem:InjectivityOnSecondCohomology}, $\phi|_{\pi_1(\partial M)}$ is non-trivial. Since $\phi(\pi_1(\partial M)) \subset \PSL_2(\bbR)$, the image $\phi(\pi_1(\partial M))$ cannot be the Klein 4-group. By \cite[Lemma 7.4]{HP05_NonAbelianReducible}, $\phi|_{\pi_1(\partial M)}$ is a smooth point of an irreducible component of $R_G(\bbZ^2)$ with local dimension four. 

Let $i:\pi_1(\partial M) \to \pi_1(M)$ be an inclusion map and $u_1:\pi_1(M) \to \liesl_2(\bbR)$ be a cocycle. Suppose we have cochains $u_2,\dots,u_k:\pi_1(M)\to \liesl_2(\bbR)$ such that 
\[
\phi_k(\gamma) = \exp\left(\sum_{i=1}^{k}t^i u_i(\gamma)\right) \phi(\gamma)
\]
is a homomorphism modulo $t^{k+1}$. From \cref{prop:FormalDeformationAndObstructionClass}, we get an obstruction class
\[
\zeta_{k+1}^{(u_1,\dots,u_k)} \in H^2(\pi_1(M);\liesl(\bbR)_\phi),
\]
which vanishes if and only if $\phi_k$ can be extended to a homomorphism modulo $t^{k+2}$.

The restriction $\phi_k\circ i$ is a homomorphism modulo $t^{k+1}$. Since $\phi \circ i$ is a smooth point of $R_G(\bbZ^2)$, $\phi_k \circ i$ extends to a homomorphism modulo $t^{k+2}$, see \cite[Lemma 3.7]{HPS}. Therefore, the order $k+1$ obstruction vanishes on the boundary:
\[
i^*\zeta_{k+1}^{(u_1,\dots,u_k)} = \zeta_{k+1}^{(i^*u_1,\dots,i^*u_k)} = 0. 
\]
By \cref{lem:InjectivityOnSecondCohomology}, $i^*$ is injective, and so the obstruction $\zeta_{k+1}^{(u_1,\dots,u_k)}$ vanishes for $\pi_1(M)$ as well. Iterating this process starting with $u_1$, we get an infinite sequence of cochains $\{u_i\}_{i=1}^\infty$ such that $u_1$ is a cocycle and the obstruction
\[
\zeta_{k+1}^{(u_1,\dots,u_k)} =0
\]
for all $k\geq 1$. By \cref{prop:FormalDeformationAndObstructionClass}, we get a representation $\phi_\infty: \pi_1(M) \to \PSL_2(\bbR[[t]])$
\[
\phi_\infty(\gamma) = \exp\left(\sum_{i=1}^{\infty}t^iu_i(\gamma)\right)\phi(\gamma)
\]
for all cocycle $u_1$. Therefore, all cocycles of $Z^1(\pi_1(M);\liesl_2(\bbR))$ are integrable. 
\end{proof}

\begin{remark}
This strategy of proving that all cocycles are integrable was carried out over $\bbC$ in \cite[Lemma 7.5]{HP05_NonAbelianReducible}. The key tool is \cite[Lemma 3.7]{HPS} which uses the formal implicit function theorem. Since the formal implicit function theorem holds over $\bbR$, we can also carry out this strategy over $\bbR$. The same strategy to prove that certain cocyles are integrable over $\bbR$ was also carried out in the proof of \cite[Propsition 10.2]{HP05_NonAbelianReducible}.
\end{remark}

\begin{proof}[Proof of \cref{thm:Le-Ordering}]
Following the proof of \cref{thm:Gao-Ordering}, it suffices to prove that the arc $A$ constructed in the proof of \cref{thm:Gao-Ordering} is not contained in $L_0$. Arguing by contradiction, suppose this arc is contained in $L_0$. As in the proof of \cref{thm:Gao-Ordering}, this would imply that the path of representation $\rho_t$ factors through $M(0)$. Since $\rho_t$ is irreducible for all $t \neq 0$, we obtain an arc in $X(M(0))$ that contains $[\rho_\alpha] = [\rho_\xi^+]$. 

On the other hand, we claim that there exists a path $\phi_t:[-1,1] \to R_G(M)$ such that $\phi_0 = \rho^+_\xi$, the non-abelian reducible representation that corresponds to $\xi$. For convenience, we let $\phi:= \rho^+_\xi$. We have the following isomorphism of cohomology groups
\[
H^1(\pi_1(M);\liesl_2(\bbC)_\phi)  = H^1(\pi_1(M);\liesl_2(\bbR)_\phi) \otimes_\bbR \bbC. 
\]
Since $\xi$ is a simple root of the Alexander polynomial, \cite[Corollary 5.4]{HP05_NonAbelianReducible} gives that $H^1(\pi_1(M);\liesl_2(\bbR)_\phi)$ is one-dimensional. Therefore, $Z^1(\pi_1(M);\liesl_2(\bbR)_\phi)$ is four-dimensional. By \cref{lem:AllCocyclesAreIntegrable}, all cocycles in $Z^1(\pi_1(M);\liesl_2(\bbR)_\phi)$ are integrable. Therefore, $\phi$ is a smooth point of $R_G(M)$ with local dimension 4. Integrating a cocycle that generates $H^1(\pi_1(M);\liesl_2(\bbR)_\phi)$, we obtain a path $\phi_t:[-1,1] \to R_G(M)$ which has $\phi_0 = \phi$ and is transverse to the orbit of $\phi$ at $t = 0$.  

We note that since $\phi$ is a smooth point of $R_G(M)$, it is contained in a unique irreducible component of $R_G(M)$. Since the abelian representations of $\pi_1(M)$ form an irreducible component of dimension 3, $\phi$ is locally four-dimensional implies that the path $\phi_t$ cannot contain any abelian representation. Consider the path $[\phi_t]$ in $X_G(M)$ which contains the character $[\phi_0]=[\rho_0]$ coming from an abelian representation. By \cite[Proposition 10.2]{HP05_NonAbelianReducible}, $[\rho_0]$ is contained in precisely two real curves of characters. One of the curves is associated with abelian representations, and the other one with irreducible representations. Since $\phi_t$ is non-abelian representation for all $t$, the path $[\phi_t]$ is contained in the curve of irreducible characters. Therefore, for $t\neq 0$, the character $[\phi_t]$ is the character of some irreducible representation. Up to shrinking either $\rho_t$ or $\phi_t$, we may assume that $[\phi_t] = [\rho_t]$ for all $t$. Since $\phi_t$ has the same character as an irreducible representation $\rho_t$ for $t\neq 0$, the representation $\phi_t$ is conjugate to $\rho_t$ for all $t \neq 0$ by \cite[Proposition 1.5.2]{CS83}. Since $\rho_t$ factors through $M(0)$ for all $t$, we also get that $\phi_t$ factors through $M(0)$ for all $t$. We obtain a path $[\phi_t]$ in $R_G(M(0))$ going through $\phi = \rho^+_\xi$ that is transverse to the orbit of $\rho^+_\xi$. The existence of this path implies that
\[
\dim_\bbR Z^1(\Gamma(0),\liesl_2(\bbR)_{\rho_\xi^+}) \geq \dim_\bbR T^{Zar}_{\rho_\xi^+}(R(\Gamma(0))) \geq 1 + \dim_\bbR B^1(\Gamma(0),\liesl_2(\bbR)_{\rho_\xi^+}) = 4.
\]
This would imply that $\dim_\bbR H^1(\Gamma(0),\liesl_2(\bbR)_{\rho_\xi^+}) \geq 1$. Since $\liesl_2(\bbC) = \liesl_2(\bbR) \otimes_\bbR \bbC $, we have 
\[
H^1(\Gamma(0),\liesl_2(\bbC)_{\rho_\xi^+}) = H^1(\Gamma(0),\liesl_2(\bbR)_{\rho_\xi^+}) \otimes_\bbR \bbC.
\]
Therefore, the dimension of $H^1(\Gamma(0),\liesl_2(\bbC)_{\rho_\xi^+})$ is at least 1. This gives a desired contradiction to the condition that $M$ is locally longitudinally rigid at $\xi$. 
\end{proof}

\section{The [1,1,2,2,2j] two-bridge knots}
\label{sec:TwoBridge}
In this section, we apply \cref{thm:Le-Ordering} to study left-orderability on the family of two-bridge knots $K_j$ associated to the continued fraction $[1,1,2,2,2j]$ for $j\geq 1$ and prove \cref{thm:LOTwoBridge}. We first make some remarks about this family of two-bridge knot complements. 

These knot complements are obtained by doing $1/j$ Dehn filling on the unknot component of the link $L^2_{25}$, see \cref{fig:L2_25}. The first two members of the family are the knots $8_{12}$ and $10_{13}$ in Rolfsen's table. As we will see in \cref{lem:AlexanderPoly}, the Alexander polynomial of $K_j$ has all simple positive real roots, that are not 1, and is not monic for $j \geq 2$. In particular, the complement of $K_j$ is not lean for $j\geq 2$. Furthermore, the trace field of $K_j$ for $1 \leq j \leq 30 $ has no real places, and it is most likely that the trace fields of all knots in this family share this property. Therefore, \cref{thm:LOTwoBridge} is not a direct consequence of \cref{thm:CullerDunfieldGao} nor \cref{thm:CullerDunfieldTraceFieldOrdering}. The family of two-bridge knots $[1,1,2,2,2j]$ is a new family of knots with an interval left-orderable Dehn surgeries which cannot be obtained from prior techniques. 

\begin{figure}
    \centering
    \includegraphics[scale = 0.5]{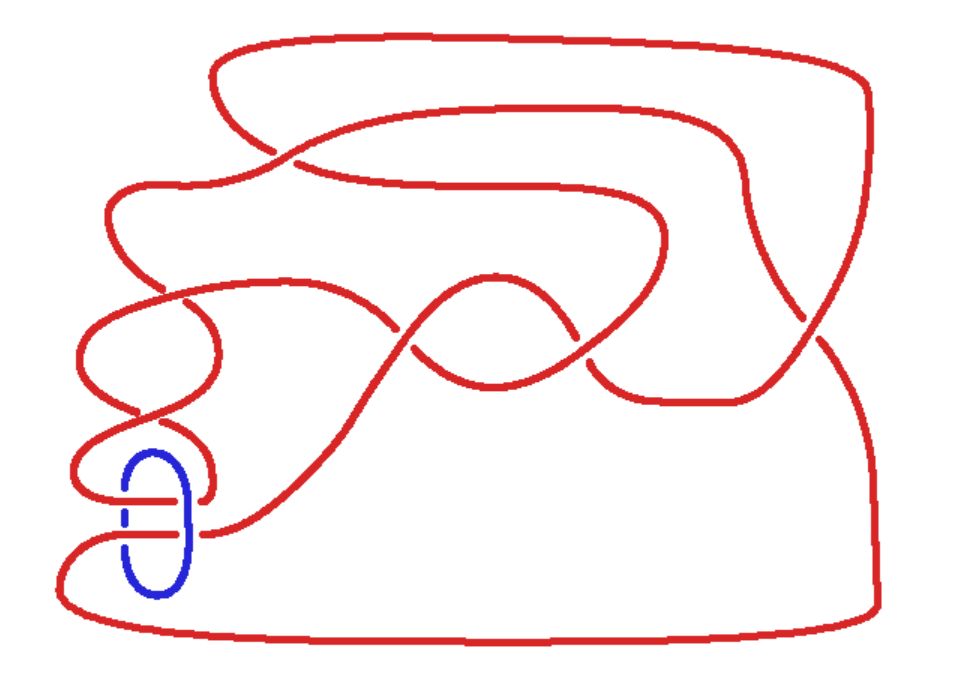}
    \caption{The link complement $L^2_{25}$}
    \label{fig:L2_25}
\end{figure}

\subsection{Group presentation}
We will denote by $\Gamma$ the fundamental group of the complement of the knot $K_j$. The knot corresponding to the continued fraction $[1,1,2,2,2j]$ has the associated fraction
\[
[1,1,2,2,2j] = 1\frac{1}{1+\frac{1}{2+\frac{1}{2+\frac{1}{2j}}}} = \frac{24j + 5}{14j + 3}.
\]
By \cite[Proposition 1]{R72}, the knot group $\Gamma$ has the presentation $ \Gamma = \langle x,y \mid xw = wy\rangle$. The word $w$ is given by
\begin{equation}
    \label{eq:wxy}
    w = y^{e_1}x^{e_2}\dots y^{e_{24j+3}}x^{e_{24j+4}}
\end{equation}
where $e_i = (-1)^{\lfloor i(14j+3)/(24j+5)\rfloor}$. Also by \cite[Proposition 1]{R72}, the homological longitude of $K_j$ that commutes with $x$ is given by $\ell = wv$ where
\begin{equation}
    \label{eq:vxy}
    v = x^{e_{24j+4}} y^{e_{24j+3}} \dots x^{e_2}y^{e_1}.
\end{equation}
We first give an explicit description of $w$ in terms of $x$ and $y$ by giving a formula for the right-hand sides of \cref{eq:wxy} and \cref{eq:vxy}. We have the following lemma. 

\begin{lem}
In the terms of the generators $x,y$ of $\Gamma$, the word $w$ has the form
\begin{equation}
\label{eq:wj}
    w = (yx^{-1}y^{-1}x)u^j \quad \text{and} \quad v = s^j(xy^{-1}x^{-1}y)
\end{equation}
where 
\begin{equation*}
\label{eq:u}
    u = (yx^{-1}yx)(y^{-1}x^{-1}yx^{-1})(y^{-1}xyx^{-1})(y^{-1}xy^{-1}x^{-1})(yxy^{-1}x)(yx^{-1}y^{-1}x)
\end{equation*}
and $s$ is $u$ spelled backwards.
\end{lem}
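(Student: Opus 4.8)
The plan is to read off the exponent sequence $(e_i)$ from the formula of \cite{R72} and to show that, after an initial run of four letters, it is periodic of period $24$. Throughout set $\alpha = 24j+5$ and $\beta = 14j+3$, so that $e_i = (-1)^{\lfloor i\beta/\alpha\rfloor}$ for $1 \le i \le \alpha-1 = 24j+4$; recall that $w$ is the word of length $24j+4$ whose $i$-th letter is $y^{e_i}$ for $i$ odd and $x^{e_i}$ for $i$ even, while $v$ is $w$ read from right to left with the signs on the letters unchanged.

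First I would record the arithmetic identity $12\beta - 7\alpha = 12(14j+3) - 7(24j+5) = 1$. It rewrites $i\beta/\alpha = 7i/12 + i/(12\alpha)$, where the correction term lies in the open interval $(0, 1/12)$ for every $1 \le i < \alpha$. Since $7i/12$, when it is not an integer, sits at distance at least $1/12$ below the next integer, adding this small positive correction cannot change the integer part, and therefore $\lfloor i\beta/\alpha\rfloor = \lfloor 7i/12\rfloor$ for all $1 \le i \le 24j+4$. Because $\lfloor 7(i+24)/12\rfloor = \lfloor 7i/12\rfloor + 14$ and $14$ is even, the sign sequence $i \mapsto (-1)^{\lfloor 7i/12\rfloor}$ has period $24$. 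Combining these two facts, $(e_i)_{i=1}^{24j+4}$ is the concatenation of the block $e_1 e_2 e_3 e_4$ with $j$ copies of the block $e_5 e_6 \cdots e_{28}$, since $e_{24k+5}\cdots e_{24k+28} = e_5\cdots e_{28}$ for $0 \le k \le j-1$.

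Then I would evaluate these two universal blocks explicitly. Using $e_i = (-1)^{\lfloor 7i/12\rfloor}$ one gets $(e_1,e_2,e_3,e_4) = (1,-1,-1,1)$, so the initial block spells $yx^{-1}y^{-1}x$. A direct computation of $(e_5,\dots,e_{28})$, together with the alternating pattern $y,x,y,x,\dots$ of the letters carrying those signs, reproduces — after grouping the $24$ letters into six syllables of length four — exactly the word
\[
u = (yx^{-1}yx)(y^{-1}x^{-1}yx^{-1})(y^{-1}xyx^{-1})(y^{-1}xy^{-1}x^{-1})(yxy^{-1}x)(yx^{-1}y^{-1}x).
\]
This gives $w = (yx^{-1}y^{-1}x)u^j$. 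Finally, since $v$ is $w$ read backwards letter by letter, $v$ is the $j$-th power of the reverse of $u$, followed by the reverse of $yx^{-1}y^{-1}x$; that is, $v = s^j(xy^{-1}x^{-1}y)$ where $s$ is $u$ spelled backwards, which is the asserted formula.

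The step I expect to be the main nuisance is the bookkeeping rather than anything conceptual: one must be careful that the reduction $\lfloor i\beta/\alpha\rfloor = \lfloor 7i/12\rfloor$ genuinely uses the inequality $i < \alpha$ (which holds here because $i \le 24j+4$ and $j \ge 1$), and one must carry out carefully the spelling of the $24$-letter block $e_5\cdots e_{28}$ and of its reversal, since it is exactly these finite computations that identify $u$ and $s$.
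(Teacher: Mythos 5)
Your argument is correct, and its overall architecture is the same as the paper's: reduce $\lfloor i(14j+3)/(24j+5)\rfloor$ to $\lfloor 7i/12\rfloor$, observe that $\lfloor 7(i+24)/12\rfloor = \lfloor 7i/12\rfloor + 14$ makes the sign sequence $24$-periodic, and then verify the initial $4$-letter block and the $24$-letter block by direct computation (and obtain $v$ by reversal). Where you differ is in the proof of the key arithmetic identity $\lfloor i(14j+3)/(24j+5)\rfloor = \lfloor 7i/12\rfloor$: you use the Bezout-type relation $12(14j+3) - 7(24j+5) = 1$ to write $i(14j+3)/(24j+5) = 7i/12 + i/(12(24j+5))$ with the correction term in $(0,1/12)$ for $i < 24j+5$, and compare against the fact that the fractional part of $7i/12$ is a multiple of $1/12$. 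The paper instead fixes $i$, treats the quotient as a decreasing function of $j$ with horizontal asymptote $7i/12$, bounds the gap above by an explicit rational expression, and separately rules out the quotient being an integer. Your route is shorter and more transparent, handles all $i$ and $j$ uniformly in one line of arithmetic, and avoids the paper's auxiliary quantity $\varepsilon_m$ and the integrality check. The only point you gloss over is the case where $7i/12$ is itself an integer (i.e.\ $12 \mid i$), which your "distance at least $1/12$ below the next integer" sentence does not literally cover; but there the conclusion is immediate since $\lfloor n + \epsilon\rfloor = n$ for $0 < \epsilon < 1$, so this is a presentational omission rather than a gap.
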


\begin{proof}
Since $v$ is $w$ spelled backwards, it suffices to prove the lemma for $w$. Let us consider  
\[k_{i,j} = \frac{i(14j+3)}{24j + 5}\]
for $1 \leq i \leq 24j + 4$. We first claim that 
\begin{equation}
\label{eq:kijPieceWiseConstant}
\lfloor k_{i,j} \rfloor = \lfloor k_{i,m} \rfloor = \left\lfloor \frac{7i}{12}\right\rfloor    
\end{equation}
for all $j\geq m$ and $\varepsilon_m \leq  i \leq 24m + 4$ where $\varepsilon_m = \max\{1,24(m-1) + 5\} $. Fixing $i$, we can view $k_{i,j}$ as a continuous function in the variable $j$. Since $i\geq 1$, the derivative of $k_{i,j}$ with respect to $j$ is 
\begin{equation*}
    \frac{d k_{i,j}}{dj} = -\frac{2i}{(24j+5)^2} < 0.
\end{equation*}
The function $k_{i,j}$ is strictly decreasing with and has a horizontal asymptote at $7i/12$ as $j\to +\infty$. Therefore, we have the following chain of inequalities 
\[\frac{7i}{12} < k_{i,j} < k_{i,m} = \frac{(14 m + 3) i }{24 m + 5} \]
for all $j\geq m$ and $\varepsilon_m \leq i \leq 24 m + 4 $. We have 
\[
0< k_{i,m} - \left\lfloor\frac{7i}{12}\right\rfloor \leq k_{i,m} - \frac{7i}{12} +\frac{11}{12} =\frac{264m + 55 + i}{288m + 60} < 1
\]
for all $ \varepsilon_m\leq i\leq 24m+4$. It follows that $k_{i,j}$ is contained in the interval $(\left\lfloor\frac{7i}{12}\right\rfloor,\left\lfloor\frac{7i}{12}\right\rfloor+1]$ for all $j\geq m$ and $\varepsilon_m \leq i \leq 24 m + 4 $. To verify \cref{eq:kijPieceWiseConstant}, it remains to show that $k_{i,m}$ is not an integer for all $\varepsilon_m \leq i \leq 24m + 4$. Since $14m+3$ and $24m+5$ are relatively prime, $k_{i,m}$ is an integer if and only if $24m+5$ divides $i$. But this is not possible since $1\leq i < 24m+5$. 

By a direct computation, we can verify \cref{eq:wj} when $j=1$. From \cref{eq:kijPieceWiseConstant}, we see that the right-hand side of \cref{eq:wxy} has prefix $w_1 = yx^{-1}y^{-1}xu$ for all $j \geq 1$. We write $w = w_1 w_j' = (yx^{-1}y^{-1}x)u w_j'$. It remains to show that $w_j' = u^{j-1}$. Using \cref{eq:kijPieceWiseConstant}, we have
\[
     \lfloor k_{i+24n,j} \rfloor = \left\lfloor \frac{7i}{12} + 14n \right\rfloor =\left\lfloor \frac{7i}{12}  \right\rfloor + 14n =   \lfloor k_{i,j} \rfloor + 14n
\]
for all $5 \leq i \leq 28$ and $5 \leq i + 24n \leq 24j + 4$. We have
\begin{equation}
\label{eq:kijParity}
\lfloor k_{i,j} \rfloor \equiv \lfloor k_{i + 24n,j} \rfloor \mod 2    
\end{equation}
for all $5 \leq i \leq 28$ and $5 \leq i + 24n \leq 24j + 4$. \cref{eq:kijParity} implies that the parity of $\lfloor k_{i,j} \rfloor$ repeats with period 24 when $i\geq 5$. Since the word for $w$ in $x$ and $y$ only depends on this parity, the word $w$ is given by \cref{eq:wj} as claimed. This completes the proof of the lemma.





\end{proof}

\subsection{The Alexander polynomial of $K_j$}
Now we will compute the Alexander polynomial of $K_j$ using non-abelian reducible representations. Let $\rho:\Gamma \to \SL_2(\bbC)$ be a non-abelian reducible representation of $\Gamma$. Since $\Gamma$ is generated by two conjugate meridians $x$ and $y$, the representation $\rho$ can be conjugated to have the form 

\begin{equation}
    \label{eq:NonAbelianReducible}
    x \mapsto \begin{pmatrix}
    t & 0 \\ 0 & t^{-1}
    \end{pmatrix}, 
    \quad \text{and} \quad
    y \mapsto \begin{pmatrix}
    t & 1 \\ 0 & t^{-1}
    \end{pmatrix}
\end{equation}
where $t\neq \pm 1$. By \cref{thm:BurdeAnddeRham}, for a knot group $\Gamma$ the assignment in \cref{eq:NonAbelianReducible} defines a representation of $\Gamma$ if and only if $t^2$ is a root of the Alexander polynomial $\Delta(\tau) \in \bbZ[\tau^{\pm 1}]$. Consequently, we can use this fact to compute the Alexander polynomial of the knot $K_j$ as follows. 

Let $F_2$ be the free group on two letters $X$ and $Y$. Consider the representation $P:F_2 \to \SL_2(\bbZ[\tau^{\pm 1}])$
\[
 X \mapsto 
    \begin{pmatrix} 
    \tau & 0 \\ 0 & \tau^{-1} 
    \end{pmatrix}, 
    \quad \text{and} \quad 
Y \mapsto 
    \begin{pmatrix}
    \tau & 1 \\ 0 & \tau^{-1}
    \end{pmatrix}.
\]
Let $W$ be the word in $X$ and $Y$ given by \cref{eq:wj}. A direct calculation shows that
\[
P(W) = \begin{pmatrix}
1 & -j \tau^3 + (5j+ 1) \tau - (5j+1)\tau^{-1} +j \tau^{-3}\\ 0 & 1 
\end{pmatrix}
\]
The representation $P$ factors through the natural projection $F_2 \to \Gamma$ if and only if $P(XW)  = P(WY)$. Or equivalently, we have
\[
j\tau^4 - (6j+1)\tau^2 + (10j+3) - (6j+1) \tau^{-2} + j \tau^{-4} = 0.
\]
The expression above is the Alexander polynomial of $K_j$ evaluated at $\tau^2$. As a convention, we will normalize the Alexander polynomial so that the lowest term of $\Delta(\tau)$ is a non-zero constant term. We have the following lemma. 

\begin{lem}
\label{lem:AlexanderPoly}
The Alexander polynomial of $K_j$ has the form 
\begin{equation}
\label{eq:AlexanderPoly}
    \Delta(\tau) = j \tau^4 -(6j+1) \tau^3 + (10j+3)\tau^2 -(6j+1)\tau +j.
\end{equation}
Furthermore, $\Delta(\tau)$ has exactly 4 simple real roots.
\end{lem}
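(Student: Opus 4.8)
The plan is to analyze the degree-4 reciprocal polynomial $\Delta(\tau) = j\tau^4 - (6j+1)\tau^3 + (10j+3)\tau^2 - (6j+1)\tau + j$ by exploiting its palindromic symmetry. Since $\tau = 0$ is not a root (the constant term is $j \neq 0$) and $\tau=1$ gives $\Delta(1) = j - (6j+1) + (10j+3) - (6j+1) + j = -j+1 \neq 0$ for $j \geq 2$ (and one checks directly $\Delta(1) = 0$ only forces nothing here since $j\ge 1$; in fact $\Delta(1)=1-j$, so $\tau=1$ is a root only when $j=1$, which we handle separately or note it is still a \emph{real} root), I would substitute $s = \tau + \tau^{-1}$. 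Dividing $\Delta(\tau)$ by $\tau^2$ gives $j(\tau^2 + \tau^{-2}) - (6j+1)(\tau + \tau^{-1}) + (10j+3) = j(s^2 - 2) - (6j+1)s + (10j+3) = js^2 - (6j+1)s + (8j+3)$. So the roots $\tau$ of $\Delta$ come in pairs $\{\tau, \tau^{-1}\}$ governed by the two roots $s_1, s_2$ of the quadratic $q(s) := js^2 - (6j+1)s + (8j+3)$.

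The key steps are then: (1) show $q$ has two distinct real roots $s_1, s_2$; (2) show each $s_i$ satisfies $|s_i| > 2$, which is precisely the condition for $\tau + \tau^{-1} = s_i$ to have two distinct \emph{real} solutions (as opposed to a complex-conjugate pair on the unit circle, or a double root at $\tau = \pm 1$); (3) conclude the four resulting real roots are distinct and none equals $1$. For step (1), compute the discriminant of $q$: $(6j+1)^2 - 4j(8j+3) = 36j^2 + 12j + 1 - 32j^2 - 12j = 4j^2 + 1 > 0$, so $s_1 \neq s_2$ are real. For step (2), since the leading coefficient $j > 0$ and I want both roots $> 2$, it suffices to check $q(2) > 0$ and that the vertex abscissa $\frac{6j+1}{2j} > 2$; indeed $q(2) = 4j - 2(6j+1) + 8j + 3 = 1 > 0$ and $\frac{6j+1}{2j} = 3 + \frac{1}{2j} > 2$. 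Hence both $s_i > 2$, so in particular $s_i \neq \pm 2$, and each equation $\tau^2 - s_i \tau + 1 = 0$ has two distinct positive real roots (product $1$, sum $s_i > 2 > 0$). For step (3): the root pairs for $s_1$ and $s_2$ are disjoint because a common root $\tau_0$ would force $s_1 = \tau_0 + \tau_0^{-1} = s_2$; and $\tau = 1$ is excluded since it would require $s_i = 2$. Thus $\Delta$ has exactly four distinct real roots, all positive (this also re-proves the "simple positive real roots $\neq 1$" claim used elsewhere, e.g. in \cref{lem:GaoLemma_5_1} and \cref{thm:Le-Ordering}).

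I do not anticipate a genuine obstacle here — the whole argument is the standard reciprocal-polynomial substitution followed by two elementary quadratic estimates, and the miraculous cancellations $q(2) = 1$ and $\operatorname{disc}(q) = 4j^2 + 1$ make it clean and uniform in $j$. The only point requiring a word of care is making explicit that $|s| > 2$ is equivalent to "$\tau^2 - s\tau + 1$ has two distinct real roots" (for $s > 2$ both roots are real, positive, distinct; for $|s| < 2$ they are a complex conjugate pair; for $s = \pm 2$ a real double root), and that simplicity of the roots of $\Delta$ follows from the simplicity of the roots of $q$ together with the disjointness of the two reciprocal pairs. I would also remark that the Alexander polynomial formula itself follows from the preceding computation of $P(W)$ and the condition $P(XW) = P(WY)$ via \cref{thm:BurdeAnddeRham}, together with the stated normalization clearing denominators in $\tau$.
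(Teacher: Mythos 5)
Your proposal is correct, but it establishes the root statement by a genuinely different route from the paper. The paper simply evaluates $\delta_j(\tau)=\Delta(\tau)/j$ at the five sample points $\tau=0,\ 1/2,\ 1,\ 2,\ 5$, observes that the sign pattern is $+,-,+,-,+$ uniformly in $j\geq 1$, and invokes the intermediate value theorem to locate four distinct real roots in $(0,5)$, which must be all of them since $\deg\Delta=4$. You instead exploit the palindromic symmetry: dividing by $\tau^2$ and substituting $s=\tau+\tau^{-1}$ reduces $\Delta$ to the quadratic $q(s)=js^2-(6j+1)s+(8j+3)$, whose discriminant $4j^2+1$ and value $q(2)=1$, together with the vertex at $3+1/(2j)>2$, show that $q$ has two distinct real roots, both greater than $2$; each then splits into a reciprocal pair of distinct positive real roots of $\Delta$, the two pairs are disjoint, and $\tau=1$ is excluded since it would force $s=2$. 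Both arguments are uniform in $j$ and yield the stronger facts (roots positive, simple, and $\neq 1$) that the paper uses elsewhere; yours explains structurally why the roots organize into reciprocal pairs and avoids guessing sample points, while the paper's is shorter to verify. Both treat the formula for $\Delta$ itself identically, reading it off from the condition $P(XW)=P(WY)$. One arithmetic slip in your parenthetical: $\Delta(1)=j-(6j+1)+(10j+3)-(6j+1)+j=1$ for every $j$ (you dropped the trailing $+j$ and obtained $1-j$), so $\tau=1$ is never a root and the special case $j=1$ you worried about does not arise; since your main argument independently excludes $\tau=1$ via $s_i>2$, nothing in your proof breaks, but the parenthetical should be deleted or corrected.
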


\begin{proof}
The discussion prior to the lemma implies that
\[
\Delta(\tau^2) = j\tau^8 - (6j+1)\tau^6 + (10j+3)\tau^4 - (6j+1) \tau^{2} + j.  
\]
This gives us \cref{eq:AlexanderPoly} as claimed. For the claim about the roots of $\Delta$, we consider $\delta(\tau) = \Delta(\tau)/j$. We note that
\begin{equation*}
    \delta_j(0) = 1, \ 
    \delta_j(1/2) = \frac{-3j+2}{16j}, \ 
    \delta_j(1) = \frac{1}{j}, \
    \delta_j(2) = \frac{-3j+2}{j}, \ 
    \delta_j(5) = \frac{96j -55}{j}.
\end{equation*} 
For all $j \geq 1$, we see that $\delta_j$ changes signs 4 times in the interval $[0,5]$. By continuity, $\delta_j(\tau)$ has 4 distinct real roots in the interval $[0,5]$. Therefore, $\Delta(\tau)$ has at least 4 positive real roots. Since $\Delta$ has degree 4, $\Delta$ has precisely 4 simple positive real roots for all $j \geq 1$. 
\end{proof}

\subsection{The group cohomology $H^1(\Gamma(0);\liesl_2(\bbC)_\rho)$}
In this section, we will prove that the knots $K_j$ are locally longitudinal rigid by directly computing the group cohomology with coefficients in $\liesl_2(\bbC)$. We first identify $\liesl_2(\bbC)$ with $\bbC^3$ by choosing the following basis 
\begin{equation*}
    v_+ = \begin{pmatrix}
    0 & 1 \\ 0 & 0 
    \end{pmatrix}, \quad  v_0 = \begin{pmatrix}
    1 & 0 \\ 0 & -1 
    \end{pmatrix}, \quad \text{and} \quad 
     v_- = \begin{pmatrix}
    0 & 0 \\ 1 & 0 
    \end{pmatrix}.
\end{equation*}
With respect to this basis, the adjoint representation $\Ad: \SL_2(\bbC) \to \SL_3(\bbC)$ becomes
\begin{equation*}
\begin{pmatrix}
a & b \\ c & d 
\end{pmatrix}   
\mapsto
\begin{pmatrix}
a^2 & -2ab & -b^2\\
-ac & ad + bc & bd \\
-c^2 & 2cd & d^2\\
\end{pmatrix}. 
\end{equation*}

By \cref{lem:AlexanderPoly}, we can choose $t \in \bbR$ such that $t^2$ is a simple root of the Alexander polynomial $\Delta(\tau)$. Since the longitude $\ell$ belongs to the second commutator subgroup of $\Gamma$, any non-abelian reducible representation on $\Gamma$ factors through $\Gamma(0)$. We get a non-abelian reducible representation $\rho:\Gamma(0)\to \SL_2(\bbC)$ given by \cref{eq:NonAbelianReducible}. For convenience, we will write
\begin{equation*}
    \rho(w) = \begin{pmatrix} 1 & f \\ 0 & 1
    \end{pmatrix}
\end{equation*}
where $f = -j t^3 +(5j+1)t - (5j+1)t^{-1} + jt^{-3}$. The action of $\Gamma(0)$ on $\liesl_2(\bbC)$ is given by 
\begin{equation*}
x\mapsto 
\begin{pmatrix} 
t^2 & 0 & 0 \\ 0 & 1 & 0 \\ 0 & 0 & t^{-2} 
\end{pmatrix}
\quad \text{and} \quad
y\mapsto 
\begin{pmatrix} 
t^2 & -2t & -1 \\ 0 & 1 & t^{-1} \\ 0 & 0 & t^{-2} 
\end{pmatrix}.
\end{equation*}
Using \cref{eq:Coboundary}, we see that the space of coboundaries can be parametrized by $d: \Gamma(0) \to  \liesl_2(\bbC)_{\rho}$ such that
\begin{equation}
\label{eq:ParametrizedCoboundary}
d(x) = 
\begin{pmatrix}(t^2-1) a \\ 0 \\ (t^{-2}-1) c \end{pmatrix} 
\quad \text{and} \quad 
d(y) = 
\begin{pmatrix}
(t^2-1) a   -2tb - c \\ t^{-1}c \\ (t^{-2}-1)c
\end{pmatrix}.
\end{equation}

\begin{prop}
\label{prop:ParametrizingH1}
Any cohomology class in $H^1(\Gamma(0);\liesl_2(\bbC)_{\rho})$ can be represented by a 1-cocycle $z\in Z^1(\Gamma(0);\liesl_2(\bbC)_\rho)$ such that
\begin{equation}
\label{eq:NormalizedCocycle}
z(x) = \begin{pmatrix}
    0 \\ \alpha \\ \beta
\end{pmatrix}
\quad \text{and} \quad
z(y) = \begin{pmatrix}
0 \\ \alpha \\ 0 
\end{pmatrix}.
\end{equation}
\end{prop}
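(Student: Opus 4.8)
The idea is to normalize an arbitrary cocycle by subtracting a suitable coboundary. So let $z\in Z^1(\Gamma(0);\liesl_2(\bbC)_\rho)$ be given, write $z(x)=(p_1,p_2,p_3)^T$ and $z(y)=(q_1,q_2,q_3)^T$ in the basis $(v_+,v_0,v_-)$, and look for a coboundary $d$ as parametrized in \cref{eq:ParametrizedCoboundary} by $a,b,c$ so that $z-d$ has the form \cref{eq:NormalizedCocycle}. Spelled out in coordinates, that form requires the four scalar conditions $(z-d)(x)_1=0$, $(z-d)(y)_1=0$, $(z-d)(y)_3=0$ and $(z-d)(x)_2=(z-d)(y)_2$. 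The first condition forces $a=p_1/(t^2-1)$, which is legitimate since $t^2\neq 1$; given $a$, the second forces $b=(p_1-q_1-c)/(2t)$, legitimate since $t\neq 0$ (note $0$ is not a root of $\Delta$, which has nonzero constant term in \cref{eq:AlexanderPoly}). The last two conditions each reduce to a linear equation for the single remaining parameter $c$, namely $c=t(q_2-p_2)$ and $c=q_3/(t^{-2}-1)$. As the space of coboundaries is only three-dimensional, these can be met simultaneously precisely when $z$ satisfies the single linear relation
\[
q_3 \;=\; (t-t^{-1})(p_2-q_2).
\]
Thus the proposition reduces to verifying this relation for every cocycle $z$.

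\textbf{Where the cocycle condition enters.} To get the relation I would use the defining relator: since $xw=wy$ holds in $\Gamma(0)$, we have $z(xw)=z(wy)$, and expanding both sides by the cocycle identity $z(\gamma\gamma')=z(\gamma)+\gamma\cdot z(\gamma')$ and rearranging gives
\[
z(x)-\Ad(\rho(w))\,z(y) \;=\; \bigl(I-\Ad(\rho(x))\bigr)\,z(w).
\]
Since $\Ad(\rho(x))=\mathrm{diag}(t^2,1,t^{-2})$, the operator $I-\Ad(\rho(x))$ annihilates $v_0$, so the right-hand side has zero $v_0$-coordinate, hence so does the left-hand side. Now $\rho(w)$ is unipotent upper triangular with upper-right entry $f=-jt^3+(5j+1)t-(5j+1)t^{-1}+jt^{-3}$, so by the adjoint formula the second row of $\Ad(\rho(w))$ is $(0,1,f)$; reading off the $v_0$-coordinate of the left-hand side therefore yields $p_2-q_2-fq_3=0$. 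Finally, a short computation with \cref{eq:AlexanderPoly} shows that $\Delta(t^2)=0$ is equivalent to the identity $f(t-t^{-1})=1$; substituting this in turns $p_2-q_2-fq_3=0$ into the desired relation $q_3=(t-t^{-1})(p_2-q_2)$.

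\textbf{Wrapping up.} With that relation in hand, I would set $a=p_1/(t^2-1)$, $c=t(q_2-p_2)$ and $b=(p_1-q_1-c)/(2t)$ in \cref{eq:ParametrizedCoboundary}; a direct check then gives $(z-d)(x)=(0,p_2,p_3-q_3)^T$ and $(z-d)(y)=(0,p_2,0)^T$, so $z-d$ is a cocycle of the form \cref{eq:NormalizedCocycle} with $\alpha=p_2$ and $\beta=p_3-q_3$. The only step that is not routine linear algebra with the explicit adjoint matrices is the identity $f(t-t^{-1})=1$; but this is exactly the content of the Alexander-polynomial computation already carried out above to obtain $P(W)$ and the relation $P(XW)=P(WY)$, so the main obstacle has essentially been dealt with there.
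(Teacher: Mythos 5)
Your proof is correct and follows essentially the same route as the paper: both arguments subtract a coboundary using the parametrization in \cref{eq:ParametrizedCoboundary} and extract the needed identity from the $v_0$-coordinate of the relator equation $z(xw)=z(wy)$, where $(1-\Ad\rho(x))$ kills the $v_0$-component and the second row of $\Ad\rho(w)$ is $(0,1,f)$. The only (cosmetic) difference is ordering — the paper first normalizes $z(x)_1=z(y)_1=z(y)_3=0$ and then reads off $\delta=\alpha$ directly, whereas you derive the compatibility relation first via the identity $f(t-t^{-1})=1$, which is indeed exactly the condition $\Delta(t^2)=0$ established in the Alexander polynomial computation.
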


\begin{proof}
Let $z\in Z^1(\Gamma(0);\liesl_2(\bbC)_\rho)$ be a 1-cocycle. Since $t^2 \neq 1$, by an appropriate choice of $a,b,c \in \bbC$ for a coboundary $d$ in \cref{eq:ParametrizedCoboundary}, we can assume that
\begin{equation*}
    z(x) = \begin{pmatrix}
    0 \\ \alpha \\ \beta
\end{pmatrix}
\quad \text{and} \quad
z(y) = \begin{pmatrix}
0 \\ \delta \\ 0 
\end{pmatrix}.
\end{equation*}
The relation $z(xw) = z(wy)$ implies that 

\[z(x) + (x - 1)\cdot z(w) - w \cdot z(y) = 0.\]
Or equivalently, we have
\begin{equation*}
    \begin{pmatrix}
    0 \\ \alpha \\ \beta
    \end{pmatrix} + 
    \begin{pmatrix}
    t^2 - 1 & 0 & 0 \\ 0 & 0 & 0 \\ 0 & 0 & t^{-2}- 1 
    \end{pmatrix}
    \begin{pmatrix}
    \omega_1 \\ \omega_2 \\ \omega_3 
    \end{pmatrix} - 
    \begin{pmatrix}
    1 & -2 f & -f^2 \\ 0 & 1 &  f \\ 0 & 0 & 1
    \end{pmatrix}
    \begin{pmatrix}
    0 \\ \delta \\ 0  
    \end{pmatrix} = 
    \begin{pmatrix}
    0 \\ 0 \\ 0
    \end{pmatrix}.
\end{equation*}
The second coordinate of the previous equation implies that $\delta = \alpha$. 
\end{proof}


We will need the following lemma
\begin{lem}
\label{lem:z(w)_z(v)}
Let $z\in Z^1(\Gamma(0);\liesl_2(\bbC)_\rho)$ be given by \cref{eq:NormalizedCocycle}. Suppose that 
\begin{align*}
z(w) = \omega_1 v_+ +  \omega_2 v_0 +  \omega_3v_- \quad \text{and}\quad  z(v) = \nu_1 v_+ +  \nu_2 v_0 +  \nu_3v_-.
\end{align*} Then 
\begin{align*}
    \omega_1 = &\alpha(-4jt^3 +(10j+2)t-2jt^{-3}) + \beta h \\
    \omega_2 =  &\left(\frac{1}{2}j(j+1)t^7 -5j(j+1)t^5 +\frac{1}{2}(35j^2+31j + 2)t^3 -\frac{1}{2}(52j^2+28j+4)t \right. \\
    &\left.+ \frac{1}{2}j(35j-3)t^{-1}-\frac{1}{2}j(10j-6)t^{-3} + \frac{1}{2}j(j-1)t^{-5}\right)\beta\\
    \nu_2 =  &\left(\frac{1}{2}j(j+1)t^7 -5j(j+1)t^5 +\frac{1}{2}(35j^2+27j + 2)t^3 -\frac{1}{2}(52j^2+12j)t \right. \\
    &\left.+ \frac{1}{2}j(35j-7)t^{-1}-\frac{1}{2}j(10j-6)t^{-3} + \frac{1}{2}j(j-1)t^{-5}\right)\beta\\
    \omega_3 &= -\nu_3 = tf \beta 
\end{align*} 
where $h \in \bbC$.  
\end{lem}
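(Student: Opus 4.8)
The plan is to compute $z(w)$ and $z(v)$ directly from the explicit words in \cref{eq:wj} using the cocycle relation \cref{eq:FoxDerivatives}. Recall $w = (yx^{-1}y^{-1}x)u^j$ where $u$ is the explicit length-24 word, and $v$ is $w$ spelled backwards, i.e. $v = s^j(xy^{-1}x^{-1}y)$. Since $z$ is normalized as in \cref{eq:NormalizedCocycle} so that $z(x) = \beta v_- + \alpha v_0$ and $z(y) = \alpha v_0$ (here I am using the basis $v_+,v_0,v_-$ of $\liesl_2(\bbC)$), the first step is to record the action of $\rho$ on $\liesl_2(\bbC)$ for $x^{\pm 1}$ and $y^{\pm 1}$ — the matrices for $x,y$ are given in the excerpt, and their inverses are obtained by inverting. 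With these in hand, the cocycle identity $z(ab) = z(a) + \rho(a)\cdot z(b)$ and $z(g^{-1}) = -\rho(g)^{-1}\cdot z(g)$ let one propagate the values letter by letter.

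The efficient way to organize this is to work multiplicatively: for a word $W = \gamma_1\cdots\gamma_n$ one has $z(W) = \sum_{i=1}^n \rho(\gamma_1\cdots\gamma_{i-1})\cdot z(\gamma_i)$. So I would first compute the partial product matrices $\rho(\text{prefix})$ in $\SL_3$ (equivalently track $\rho$ in $\SL_2$ and then apply $\Ad$), then sum up the contributions. Because the word $u$ is fixed and independent of $j$, the contribution of $u^j$ to $z(w)$ splits as a geometric-type sum: writing $\rho(u) = \begin{pmatrix} 1 & * & * \\ 0 & 1 & * \\ 0 & 0 & 1\end{pmatrix}$ as an upper-triangular unipotent (it must be unipotent since $\rho(u)$ is a product whose image in $\PSL_2$ is, up to the relation, trivial on the relevant pieces — more precisely one checks $\rho(u)$ is parabolic fixing $\infty$), one gets $\rho(u^k) $ with entries polynomial in $k$, and $z(u^j) = \sum_{k=0}^{j-1} \rho(u^k)\cdot z(u)$, which produces the quadratic-in-$j$ coefficients visible in $\omega_2$ and $\nu_2$. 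This explains the shape $\tfrac12 j(j+1)t^7 - \cdots$ of the answer: the $j^2$ terms come from summing a linear-in-$k$ quantity, the $j$ terms from the constant part plus the leading prefix $yx^{-1}y^{-1}x$.

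Concretely I would: (1) compute $z(yx^{-1}y^{-1}x)$ and $\rho(yx^{-1}y^{-1}x)$ explicitly — a short finite computation giving the ``$+1$'' type corrections in $\omega_1,\omega_2$; (2) compute $\rho(u)$ and $z(u)$ in terms of $\alpha,\beta$ (and $t$); (3) form $z(u^j) = \bigl(\sum_{k=0}^{j-1}\rho(u)^k\bigr)\cdot z(u)$ using the unipotent structure to get closed forms; (4) assemble $z(w) = z(yx^{-1}y^{-1}x) + \rho(yx^{-1}y^{-1}x)\cdot z(u^j)$ and read off $\omega_1,\omega_2,\omega_3$; (5) repeat for $v$ with $s = $ reverse of $u$, which by symmetry differs only in sign/ordering of certain terms, yielding $\nu_2$ and $\nu_3 = -\omega_3$. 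The claim $\omega_3 = -\nu_3 = tf\beta$, where $f$ is the $(1,2)$-entry of $\rho(w)$, is a useful consistency check: the $v_-$-component of $z(w)$ should only see the ``diagonal'' part of the propagation, and the appearance of $f$ reflects that $\rho(w)$ is parabolic with off-diagonal entry $f$. Note that the $\alpha$-contributions to $\omega_2$ and $\nu_2$ vanish — the only surviving source for the $v_0$-component is $\beta$ — which should fall out of the computation since $z$ restricted to the abelianization contributes only coboundary-type terms that were already normalized away; I would double-check this cancellation carefully.

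The main obstacle is purely computational bookkeeping: the word $u$ has 24 letters, so step (2) requires multiplying 24 matrices in $\SL_2(\bbC[t^{\pm1}])$ and simultaneously accumulating the cocycle sum, then step (3) requires expanding the geometric sum and collecting powers of $t$ and $j$ correctly. There is no conceptual difficulty — the cocycle relations are linear and the structure (unipotent $\rho(u)$) makes the $j$-dependence tractable — but the bookkeeping is error-prone, so in practice I would verify the final formulas against the $j=1$ case by the independent direct computation mentioned after \cref{eq:wj} (where $P(W)$ is computed explicitly), and against the Alexander polynomial relation $\rho(xw) = \rho(wy)$ which forces $f$ to satisfy $\Delta(t^2)=0$. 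I expect the only genuinely delicate point to be getting all the coefficients in $\omega_2$ and $\nu_2$ exactly right, since those are degree-2 polynomials in $j$ with seven monomials in $t$ each; everything else ($\omega_1$, $\omega_3=-\nu_3=tf\beta$) is comparatively short.
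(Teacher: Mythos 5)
Your proposal is correct and follows essentially the same route as the paper: decompose $z(w)=z(yx^{-1}y^{-1}x)+(yx^{-1}y^{-1}x)\cdot\bigl(\sum_{i=0}^{j-1}u^i\bigr)\cdot z(u)$ (and the analogous formula for $v$ with $s$), exploit that $\Ad\circ\rho(u)$ is upper-triangular unipotent so the geometric sum has entries polynomial in $j$, and assemble from the explicitly computed $z(u)$, $z(s)$ and the prefix/suffix commutator terms. The paper's proof is exactly this bookkeeping, recorded via the displayed values of $\sum(\Ad\circ\rho)(u^i)$, $\sum(\Ad\circ\rho)(s^i)$, $z(u)$, $z(s)$, $z(yx^{-1}y^{-1}x)$ and $z(xy^{-1}x^{-1}y)$.
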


\begin{proof}
The proof of this lemma is a direct calculation. By a repeat application of the cocycle relation in \cref{eq:FoxDerivatives}, we have
\begin{align*}
    z(w) &= z(yx^{-1}y^{-1}x) + (yx^{-1}y^{-1}x)\cdot\sum_{i=0}^{j-1} u^i \cdot z(u),\\
    z(v) &=  s^j \cdot z(xy^{-1}x^{-1}y) + \sum_{i=0}^{j-1} s^i \cdot z(s).
\end{align*}
We also have 
\begin{align*}
    \sum_{i=0}^{j-1} (\Ad\circ \rho) (u^i) &= \begin{pmatrix}
    j & (j^2-j)(t^3-5t+5t^{-1} - t^{-3}) & -\frac{1}{6}(2j^3-3j^2 +j)(t^3-5t+5t^{-1} - t^{-3})^2 \\
    0 & j & -\frac{1}{2}(j^2-j)(t^3-5t+5t^{-1} - t^{-3}) \\
    0 & 0 & j
    \end{pmatrix}, \\
    \sum_{i=0}^{j-1} (\Ad\circ \rho) (s^i) &= \begin{pmatrix}
    j & -(j^2-j)(t^3-5t+5t^{-1} - t^{-3}) & -\frac{1}{6}(2j^3-3j^2 +j)(t^3-5t+5t^{-1} - t^{-3})^2 \\
    0 & j & \frac{1}{2}(j^2-j)(t^3-5t+5t^{-1} - t^{-3}) \\
    0 & 0 & j
    \end{pmatrix},
\end{align*}
and
\begin{equation*}
    z(u) = 
    \begin{pmatrix}
    (-4t^3 + 10t -2t^{-3})\alpha + h'\beta\\ (t^7 - 9t^5 + 27t^3 - 30t +10t^{-1} -t^{-3})\beta \\ (-t^4 +5t^2-5 +t^{-2})\beta 
    \end{pmatrix} \quad \text{and} \quad
    z(s) = 
    \begin{pmatrix}
    (4t^3 - 10t +2t^{-3})\alpha + h''\beta\\ (t^7 - 9t^5 + 25t^3 - 22t +8t^{-1} -t^{-3})\beta \\ (t^4 - 5t^2 +5 -t^{-2})\beta 
    \end{pmatrix}
\end{equation*}
for some $h',h'' \in \bbC$. The lemma will follow once we note that
\[
z(yx^{-1}y^{-1}x)  = \begin{pmatrix}
2t\alpha - (t^4-3t^2+1)\beta \\ (t^3-2t)\beta \\ (t^2-1)\beta
\end{pmatrix} \quad \text{and} \quad 
z(xy^{-1}x^{-1}y) = \begin{pmatrix}
(-6t + 2t^{-1})\alpha + t^4\beta \\
t^3\beta \\
(-t^2 + 1)\beta
\end{pmatrix}.
\]
\end{proof}

Now we are ready to show that $H^1(\Gamma(0);\liesl_2(\bbC)_\rho) = 0$.

\begin{proof}[Proof of \cref{thm:LOTwoBridge}]
Now we will show that $K_j$ is locally longitudinally rigid for all $j\geq 1$ at any root of the Alexander polynomial. Let $[z] \in H^1(\Gamma(0);\liesl_2(\bbC)_\rho)$. By \cref{prop:ParametrizingH1}, we can assume that $z$ satisfies \cref{eq:NormalizedCocycle}. Since $z(\ell) = 0$, we must have
\[
z(w) + w\cdot z(v) = 0. 
\]
Let us write $z(w) =\omega_1 v_+ +\omega_2v_0 + \omega_3v_-$ and $z(v) =\nu_1 v_+ +\nu_2v_0 + \nu_3v_-$. The second coordinate of this equation implies that $\omega_2 + \nu_2 + f \nu_3 = 0$. Using \cref{lem:z(w)_z(v)}, we have
\[
\frac{(t^4 - 1)(t^4 + j (t^4 -4t^2 + 1)^2)}{t^5}\beta = 0. 
\]
Suppose that $\beta \neq 0$. Since $t \in \bbR - \{\pm 1\}$, we have 
\begin{align*}
    t^4 + j (t^4 -4t^2 + 1)^2 = 0.
\end{align*}
Since $t\in \bbR$ and $j \geq 1$, the above equation holds if and only if 
\[
t = 0 \quad \text{and} \quad t^4 -4t^2 + 1 = 0.
\]
This is the desired contradiction. Therefore, we must have $\beta = 0$. 

From the first coordinate of the relation $z(xw) = z(wy)$, we have $(t^2-1) \omega_1 + 2f \alpha = 0$. Using $\beta = 0$ and \cref{lem:z(w)_z(v)}, this equation is equivalent to
\begin{equation*}
    (t^4- 1)(2jt^4-(6j+1)t^2+2j))\alpha = 0. 
\end{equation*}
Similarly, if $\alpha \neq 0$, we must have $(2jt^4-(6j+1)t^2+2j)) = 0 $. It follows that $t^2$ is a root of both
\[\Delta(\tau) \quad \text{and} \quad h(\tau) :=(2j\tau^2-(6j+1)\tau+2j).\]

Note that the roots of $h(\tau)$ are reciprocal of each other. Since $\Delta(\tau)$ is a reciprocal polynomial, the roots of $\Delta(\tau)$ come in reciprocal pairs. Therefore, $h(\tau)$ divides $\Delta(\tau)$. By Gauss's lemma, we can write $\Delta(\tau) = h(\tau)k(\tau)$ for $k(\tau)\in \bbZ[\tau]$. This implies that $\Delta(0) = h(0)k(0)$ or $j = 2j k(0)$. This contradicts the fact that $j \geq 1$ and $k(0) \in \bbZ$. Therefore, $\alpha = 0$ and $z$ can only be the zero cocycle. Consequently, $H^1(\Gamma(0);\liesl_2(\bbC)_\rho) = 0$ where $\rho$ is any non-abelian reducible representation of $\Gamma(0)$. In other words, the knot $K_j$ is locally longitudinally rigid at any root of the Alexander polynomial. By \cref{thm:Le-Ordering}, there exists an interval of left-orderable Dehn surgeries near 0.     
\end{proof}

\printbibliography

\end{document}